\newtheorem{theorem}{Theorem}
\newtheorem{proposition}{Proposition}
\newtheorem{lemma}{Lemma}
\newtheorem{corollary}{Corollary}
\theoremstyle{definition}
\newtheorem{definition}{Definition}
\theoremstyle{remark}
\newtheorem{remark}{Remark}
\numberwithin{equation}{section}
\newcommand{\field}[1]{\ensuremath{\mathbb{#1}}}
\newcommand{\CC}{\field{C}}
\newcommand{\HH}{\field{H}}
\newcommand{\PP}{\field{P}}
\newcommand{\RR}{\field{R}}
\newcommand{\fd}{\mathfrak{d}}
\newcommand{\fe}{\mathfrak{e}}
\newcommand{\fg}{\mathfrak{g}}
\newcommand{\fh}{\mathfrak{h}}
\newcommand{\fk}{\mathfrak{k}}
\newcommand{\fm}{\mathfrak{m}}
\newcommand{\fo}{\mathfrak{o}}
\newcommand{\bv}{\mathbf{v}}
\newcommand{\bfe}{\mathbf{e}}
\newcommand{\bfl}{\mathbf{l}}
\newcommand{\bfI}{\mathbf{I}}
\newcommand{\curly}[1]{\mathscr{#1}}
\newcommand{\cC}{\curly{C}}
\newcommand{\cL}{\curly{L}}
\newcommand{\cR}{\curly{R}}
\newcommand{\cU}{\curly{U}}
\newcommand{\SO}{\mathrm{{SO}}}
\newcommand{\OO}{\mathcal{O}}
\DeclareMathOperator{\tr}{tr}
 \DeclareMathOperator{\ad}{ad}
\begin{document}
\title[Deformations with angular momentum]{Linear phase space deformations with angular momentum symmetry}

\author[Meneses]{Claudio Meneses}
\address{Mathematisches Seminar, Christian-Albrechts Universit\"at zu Kiel, Ludewig-\indent Meyn-Str. 4, 
24118 Kiel, Germany}
\email{meneses@math.uni-kiel.de} 

\subjclass[2010]{Primary 17B08, 17B56, 17B80; Secondary 53D20, 14H70}

\maketitle
\begin{abstract}
Motivated by the work of Leznov--Mostovoy \cite{LM-03}, we classify the linear deformations of standard $2n$-dimensional phase space 
that preserve the obvious symplectic $\mathfrak{o}(n)$-symmetry. As a consequence, we describe standard phase space, as well as $T^{*}S^{n}$ and $T^{*}\mathbb{H}^{n}$ with their standard symplectic forms, as degenerations of a 3-dimensional family of coadjoint orbits, which in a generic regime are identified with the Grassmannian of oriented 2-planes in $\mathbb{R}^{n+2}$.\\

\noindent{\it Keywords}: Coadjoint orbits; Lie algebra deformations; momentum maps.
\end{abstract}

\section{Introduction and statement of the problem}

The notions of momentum maps and symplectic reduction provide a very convenient formulation of integrability in classical mechanics \cite{GS84, GS06}. A standard example is the integrability of central potential Hamiltonians such as the $n$-dimensional Kepler and harmonic oscillator problems, which can be understood in terms of dynamical (i.e. symplectic) symmetries for the groups $\mathrm{SO}(n+1)$ and $\mathrm{SU}(n)$ \cite{Len24,Fra65}. These principles can be exploited in more general contexts, such as the analogous integrability of the Kepler and harmonic oscillator problems on the round $n$-sphere studied by Higgs \cite{Higgs78}. The standard formulation of these problems can then be recovered from a limiting process, if we interpret the sectional curvature of the $n$-sphere as a deformation parameter yielding a commutative linear deformation of the Poisson structure in standard phase space.

We will address a natural generalization of the previous idea. A precise formulation relies on three facts (two of which are proved in appendix \ref{appendix}):
\begin{itemize}
\item[(i)] $\left(\mathbb{R}^{n}\oplus\mathbb{R}^{n},\omega = \sum_{i = 1}^{n} dx_{i}\wedge dp_{i}\right)$ is symplectomorphic to a connected component $\OO^{+}_{2n}$ of a coadjoint orbit $\OO_{2n}$ of $\mathrm{G}_{n} = \mathrm{O}(n)\ltimes \mathrm{H}_{n}$, where $\mathrm{H}_{n}$ is the $2n+1$-dimensional Heisenberg group. The action of $\mathrm{O}(n)$ on $\mathrm{H}_{n}$ is the standard one on its $\RR^{n}\oplus\RR^{n}$ subgroup and trivial on the central extension element.
\item[(ii)] $\fg_{n} = \fo(n)\ltimes\fh_{n}$ is the orthogonal Lie algebra associated to a quadratic form $Q_{0}$ on $\RR^{n+2}$ of isotropy index 2 and signature $(n,0)$, and $\OO_{2n}$ is identified with a Zariski open set in $\widetilde{\mathrm{Gr}}_{2}\left(\RR^{n+2}\right)$, the Grassmannian of oriented 2-planes in $\RR^{n+2}$.
\item[(iii)] $(T^{*}S^{n},\omega)$ and $(T^{*}\mathbb{H}^{n},\omega)$ with their standard symplectic forms are symplectomorphic to coadjoint orbits (a connected component in the latter case) for \emph{deformations} of $\fg_{n}$ that are respectively isomorphic to $\mathfrak{e}(n + 1) = \fo(n+1)\ltimes \RR^{n+1}$ and  $\mathfrak{e}(n, 1) = \fo(n,1)\ltimes \RR^{n+1}$. The deformation parameter is interpreted as the sectional curvature of an underlying configuration space (remark \ref{rem:cotangent}). 
\end{itemize}

The problem that we pose is the following: \emph{to classify all deformations of the Lie algebra $\fg_{n}$ and the subsequent coadjoint orbits that specialize to the previous examples}.\footnote{The study of deformations and contractions of Lie algebras in physics originates in \cite{IW53}. The reader can find a leisurely exposition of such ideas in \cite{GS84,GS06}.}

At a technical level, the problem is equivalent to the understanding of the Lie algebra cohomology $H^{2}(\fg_{n},\fg_{n})$. In virtue of (ii), the problem is related to the study of equivalence classes of deformations of a degenerate quadratic form in $\RR^{n+2}$ of isotropy index 2 and signature $(n,0)$. Our treatment emphasizes the physical and geometric features of the problem while solving it, and is motivated by the work of Leznov-Mostovoy \cite{LM-03}. They studied the Kepler problem on a 3-dimensional family of deformations of standard phase space in the special case $n = 3$, interpreted as commutative deformations of the standard Poisson structure on $C^{\infty}\left(\RR^{3}\oplus\RR^{3}\right)$. The main result of this article is a proof that the generators of the Leznov-Mostovoy deformations determine three cocycles spanning $H^{2}(\fg_{n},\fg_{n})$ for arbitrary $n \geq 3$.

\begin{theorem}\label{def-Heisenberg}
Let $n \geq 3$. 
The space of infinitesimal deformations of $\fg_{n}^{\CC}$ is three-dimensional. Every infinitesimal deformation is integrable, and the generic deformation is isomorphic to $\fo(n+2,\CC)$.
\end{theorem}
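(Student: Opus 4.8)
The plan is to compute $H^2(\fg_n^\CC, \fg_n^\CC)$ directly from the Chevalley--Eilenberg complex, using the semidirect product structure $\fg_n = \fo(n) \ltimes \fh_n$ to organize the calculation. I would set up a weight/bidegree decomposition: the Heisenberg algebra $\fh_n$ carries a grading (the $\RR^n \oplus \RR^n$ part in degree $1$, the center in degree $2$, or more usefully a grading by the Heisenberg scaling), and $\fo(n)$ acts preserving it. Since $\fo(n)$ is reductive, the first move is to pass to $\fo(n)$-invariants: by a standard averaging argument, $H^\bullet(\fg_n, \fg_n)$ is computed by the subcomplex of $\fo(n)$-basic cochains, i.e. $C^\bullet(\fg_n,\fg_n)^{\fo(n)}$ with the $\fo(n)$-contractions removed, which via a Hochschild--Serre-type spectral sequence for the ideal $\fh_n \lhd \fg_n$ reduces to $H^\bullet(\fh_n, \fg_n)^{\fo(n)}$ together with the $\fo(n)$-module structure on these Heisenberg cohomologies. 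So the real computational core is the Heisenberg Lie algebra cohomology $H^2(\fh_n, \fg_n)$ as an $\fo(n)$-representation, which then must be cut down to its invariants.

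\smallskip

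Concretely, $\fg_n$ as an $\fo(n) \ltimes \fh_n$-module splits into the adjoint piece $\fo(n)$, the defining-plus-defining piece $\RR^n \oplus \RR^n \cong V \oplus V$ with $V$ the standard representation, and the trivial central line. I would compute $H^2(\fh_n, M)$ for each summand $M \in \{\fo(n), V\oplus V, \RR\}$ using the known structure of Heisenberg cohomology (the cohomology of $\fh_n$ with trivial coefficients is classical, and with the standard modules it reduces to explicit $\mathfrak{sp}$-type invariant theory since $\RR^n\oplus\RR^n$ is the standard symplectic space acted on by the Heisenberg algebra). Then I extract the $\fo(n)$-invariant part. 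The expectation, matching the theorem, is that exactly a three-dimensional space of invariant $2$-cocycles survives; and crucially the three explicit cocycles coming from the Leznov--Mostovoy generators (available from the earlier parts of the paper / appendix) give three manifestly independent classes, so it suffices to prove the \emph{upper bound} $\dim H^2 \leq 3$ from the complex and then observe these three are non-cohomologous and non-trivial. The hypothesis $n \geq 3$ enters to kill low-rank coincidences (e.g.\ $\fo(2)$ abelian, exceptional isomorphisms in ranks $\leq 2$) that would otherwise inflate the invariant spaces.

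\smallskip

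For the second assertion --- integrability of every infinitesimal deformation --- the cleanest route is to exhibit an explicit $3$-parameter family of Lie algebras $\fg_n(\lambda)$, $\lambda = (\lambda_1,\lambda_2,\lambda_3) \in \CC^3$, containing $\fg_n = \fg_n(0)$, whose tangent space at the origin realizes all three cohomology classes; then a Kuranishi-type argument shows the obstruction map $H^2 \to H^3$ vanishes on the whole family and integrability is automatic. Here (ii) in the introduction is the key input: $\fg_n$ is the orthogonal algebra of a degenerate quadratic form $Q_0$ of isotropy index $2$ on $\RR^{n+2}$, so deforming $Q_0$ within quadratic forms of isotropy index $2$ gives an honest family of orthogonal Lie algebras, and by a dimension/Witt-type normal form count the moduli of such forms (up to equivalence) is exactly $3$-dimensional, with generic member nondegenerate over $\CC$, hence $\fo(n+2,\CC)$. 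Matching this geometric family to the cocycle count in the first part identifies the two $3$'s, proving both that the family is versal and that the generic fiber is $\fo(n+2,\CC)$.

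\smallskip

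\textbf{Main obstacle.} The bottleneck is the explicit computation of $H^2(\fh_n,\fg_n)$ as an $\fo(n)$-module and pinning down its invariants: the Heisenberg algebra is only two-step nilpotent but not reductive, so its cohomology with these non-trivial coefficient modules is delicate, the relevant spectral sequence need not degenerate, and one must track the $\fo(n)$-decomposition carefully through the differentials to be sure nothing survives beyond the predicted three classes (and, separately, that no spurious invariant $1$-cochains create coboundaries that would drop the count below $3$). Getting a clean closed-form answer --- rather than a case analysis in $n$ --- is where the care is needed; the geometric picture from (ii) is what makes the final bookkeeping tractable, since it predicts the answer and supplies the integrating family.
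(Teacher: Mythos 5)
Your outline reproduces the paper's skeleton (Hochschild--Serre reduction to $H^{2}(\fh_{n}^{\CC},\fg_{n}^{\CC})^{\fo(n,\CC)}$, three explicit cocycles, integration via an explicit family tied to the degenerate quadratic form $Q_{0}$), but it stops short of the step that actually constitutes the theorem: the upper bound $\dim H^{2}(\fh_{n}^{\CC},\fg_{n}^{\CC})^{\fo(n,\CC)}\leq 3$. You state this as an ``expectation'' and explicitly flag the computation of $H^{2}(\fh_{n},\fg_{n})$ as an $\fo(n)$-module as the unresolved bottleneck, so the dimension count --- the core assertion --- is not proved. The paper closes exactly this gap by a two-stage reduction you do not supply: (a) using the central-extension structure, every invariant cocycle is shown to be cohomologous to one vanishing on the pairs $(\bfe_{i},\bfI)$ and taking values in $\fo(n,\CC)\ltimes\CC^{2n}$ (explicit coboundary corrections built from the symplectic pairing), which yields $H^{2}(\fh_{n}^{\CC},\fg_{n}^{\CC})^{\fo(n,\CC)}\cong H^{2}(\CC^{2n},\fo(n,\CC)\ltimes\CC^{2n})^{\fo(n,\CC)}$; and (b) a Euclidean-algebra lemma showing $H^{2}(\CC^{n},\fe_{n}^{\CC})^{\fo(n,\CC)}$ is one-dimensional --- proved by checking that every linear map $\bigwedge^{2}\CC^{n}\to\CC^{n}$ is a coboundary (the coboundary map onto that block is an isomorphism by a dimension count on the basis $l_{ijk}$) and that an invariant cocycle valued in $\fo(n,\CC)$ is forced to be a multiple of $(\bfe_{i},\bfe_{j})\mapsto\bfl_{ij}$ --- which is then applied to each block of $\bigwedge^{2}\CC^{2n}=\bigwedge^{2}\CC^{n}_{1}\oplus(\CC^{n}_{1}\otimes\CC^{n}_{2})\oplus\bigwedge^{2}\CC^{n}_{2}$ to produce exactly $f_{1},f_{2},f_{3}$ and nothing else. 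Note also that the relevant invariance is invariance up to coboundary (the class, not the cochain, is $\fo(n)$-fixed), and at $n=3$ the coincidence $\fo(3,\CC)\cong\CC^{3}$ creates extra equivariant $1$-cochains; your invariant-theoretic bookkeeping would have to handle both points, and neither is addressed.

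Two secondary corrections. First, your Witt-type count is misstated: over $\CC$ the moduli of quadratic forms up to equivalence is discrete (classified by rank), so it is not ``exactly $3$-dimensional''; what is three-dimensional is the space of deformations of the bilinear form of $Q_{0}$ along the totally isotropic plane $W=\mathrm{Span}\{\bv_{n+1},\bv_{n+2}\}$, before quotienting by equivalence (this is the paper's remark on quadratic forms, and identifying it with $H^{2}$ again presupposes the unproved upper bound). Second, no Kuranishi-type argument is needed for integrability: the three-parameter bracket deformation is linear in $(\varepsilon_{1},\varepsilon_{2},\varepsilon_{3})$ and satisfies the Jacobi identity exactly (equivalently, it is the orthogonal algebra of the deformed form), so every infinitesimal deformation integrates with no higher-order corrections; you would only need to check that the derivatives of this family at $\pmb{\varepsilon}=0$ are nontrivial and independent in cohomology, which is easy since coboundaries in these blocks take values in $\CC^{2n}\oplus\CC\bfI$ while $f_{1},f_{2},f_{3}$ take values in $\fo(n,\CC)$.
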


However, the induced linear deformations of $\fg_{n}^{\CC}$ are not independent (corollary \ref{cor:effective-parameters}), marking a subtle difference between the infinitesimal and global pictures. The effective family of linear deformations of $\fg_{n}^{\CC}$ turns out to be at most two-dimensional. 

The phase space deformations that we will study are natural generalizations of the phase space of a manifold of constant sectional curvature, and contain the latter as particular cases (remark \ref{rem:cotangent}). Such deformations can be understood geometrically in terms of the Grassmannian of oriented planes in $\RR^{n+2}$, relative to a family of deformations of  a degenerate quadratic form of signature $(n,0)$ and isotropy index 2. In particular, there is a generic regime of deformation parameters for which the induced symplectic manifold is \emph{compact}. This leads to the possibility of extending the study of classical and quantum integrable systems on spaces of constant curvature to the most general phase space deformations that preserve a notion of angular momentum symmetry, by means of the study of the geometry of suitable momentum maps.  We plan to address such a problem in the future.

The work is organized as follows. Section \ref{sec:deformations} is dedicated to presenting a proof of theorem \ref{def-Heisenberg} following an application of the Hochschild-Serre spectral sequence. An argument for an alternative proof in terms of the geometry of quadratic forms is given in remark \ref{rem:def-quadratic form}. The rest of the article is a series of applications of theorem \ref{def-Heisenberg}. Section \ref{sec:coadjoint} describes the general family $\OO_{2n}(\pmb{\varepsilon})$ of coadjoint orbits, induced by deformations of $\fg_{n}$, that correspond to the deformations of standard phase space (corollary \ref{cor:deformations}). Section \ref{sec:free motion} describes some geometric structures in $\OO_{2n}(\pmb{\varepsilon})$ corresponding to the induced deformations of the position and momentum polarizations in phase space, and the Euclidean group momentum map corresponding to the free-motion Hamiltonian.

\section{Deformations of the Lie algebra $\mathfrak{o}(n)\ltimes \mathfrak{h}_{n}$}\label{sec:deformations}

It will be convenient to work with complex Lie algebras, since the real deformations of $\fg_{n}$ can be regarded as all possible real forms of a complex deformation of its complexification. The infinitesimal deformations of a complex Lie algebra $\fg$ are described by the Lie algebra cohomology space $H^{2}(\fg,\fg)$ with respect to the adjoint representation \cite{CE48,Ger64,LN67,Fia85}. We are interested in the case when $\fg$ is a semidirect product of a \emph{semisimple} Lie subalgebra $\fk\subset\fg$ and an ideal $\fh\subset\fg$, i.e. $\fg = \fk\ltimes \fh$. Hence $\fg$ is also a module for $\fk$ and $\fh$. Let $E^{p,q}_{2} = H^{p}\left(\fk,H^{q}(\fh,\fg)\right)$. It follows from Whitehead's lemma that  $E^{1,1}_{2} = 0$ and $E^{2,0}_{2} = 0$. The Hochschild-Serre spectral sequence \cite{HS-53,Weibel} collapses from the $E_{2}$-term for $p+q = 2$. 
Hence, restriction induces the isomorphism
\begin{equation}\label{iso-cohomology}
H^{2}(\fg,\fg)\cong E^{0,2}_{2}\cong H^{2}(\fh,\fg)^{\fk}
\end{equation} 
(cf. \cite[theorem 13]{HS-53}). In order to describe the deformations of $\fg_{n}^{\CC}$, we will first consider the case $\fk=\fo(n,\CC)$, $\fh=\CC^{n}$ so $\fg=\fe^{\CC}_{n}=\fo(n,\CC)\ltimes \CC^{n}$, the complexification of the Euclidean Lie algebra in dimension $n \geq 3$. 
In terms of a canonical basis $\{\bfe_{i},\bfl_{jk}\}$,  $\fe^{\CC}_{n}$ is defined by the commutation relations
\[
[\bfe_{i},\bfe_{j}]=0,\quad [\bfl_{ij},\bfe_{k}]=\delta_{ik}\bfe_{j}-\delta_{jk}\bfe_{i},\quad
[\bfl_{ij},\bfl_{kl}]=\delta_{ik}\bfl_{jl}+\delta_{jl}\bfl_{ik}
-\delta_{il}\bfl_{jk}-\delta_{jk}\bfl_{il}.
\]
We will now describe the space $H^{2}\left(\CC^{n},\fe_{n}^{\CC}\right)^{\fo(n,\CC)}$ explicitly. By definition, a 2-cocycle is a linear map $f:\bigwedge^{2}\CC^{n}\to\fe^{\CC}_{n}$
 satisfying 
 \[
 [\bfe_{i},f(\bfe_{j},\bfe_{k})]-[\bfe_{j},f(\bfe_{i},\bfe_{k})]+[\bfe_{k},f(\bfe_{i},\bfe_{j})]=0\quad \forall i,j,k.
 \]
A 2-coboundary is a linear map of the form 
\[
f(\bfe_{i},\bfe_{j})=[l(\bfe_{i}),\bfe_{j}]+[\bfe_{i},l(\bfe_{j})]
\]
for some linear map $l:\CC^{n}\to\fe^{\CC}_{n}$. A 2-cocycle $f$ is called \emph{invariant} if $\forall g\in\fo(n,\CC)$,
 \begin{equation}\label{cocycle-inv}
(g\cdot f)(\cdot,\cdot) = [g,f(\cdot,\cdot)]-f([g,\cdot],\cdot)-f(\cdot,[g,\cdot])=\text{a coboundary}.
 \end{equation}

 \begin{lemma}\label{def-Euclidean}
The space of infinitesimal deformations of the complexification of the Euclidean Lie algebra $\fe^{\CC}_{n}$, $n\geq 3$, is one-dimensional and generated by the invariant 2-cocycle
\[
f(\bfe_{i},\bfe_{j})=\bfl_{ij}.
\]
Every infinitesimal deformation is integrable. Together, they determine a one-dimensional family of Lie algebras $\fe^{\CC}_{n}(\varepsilon)$, where $\fe^{\CC}_{n}(\varepsilon)\cong\fo(n+1,\CC)$ for $\varepsilon\neq 0$ (cf. \cite{GS84}).
\end{lemma}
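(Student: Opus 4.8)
The plan is to compute $H^{2}(\fe^{\CC}_{n},\fe^{\CC}_{n})$ via the isomorphism \eqref{iso-cohomology}, that is, to determine $H^{2}(\CC^{n},\fe^{\CC}_{n})^{\fo(n,\CC)}$ explicitly and then to write down an integrating family by hand. Let $V=\CC^{n}$ denote the standard representation of $\fk=\fo(n,\CC)$ and recall that, as $\fk$-modules, $\fe^{\CC}_{n}=\fo(n,\CC)\oplus V$ with $\fo(n,\CC)\cong\bigwedge^{2}V$ (the adjoint representation, $\bfe_{i}\wedge\bfe_{j}\leftrightarrow\bfl_{ij}$). Accordingly I would split a $2$-cochain $f\colon\bigwedge^{2}V\to\fe^{\CC}_{n}$ as $f=(f_{1},f_{2})$ with $f_{1}\in\End(\bigwedge^{2}V)$ and $f_{2}\in\Hom(\bigwedge^{2}V,V)$. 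Because $V$ is an abelian ideal, $[\bfe_{i},f_{2}(\bfe_{j},\bfe_{k})]=0$, so the cocycle identity constrains only $f_{1}$: it says precisely that the skew-symmetrization $\partial f_{1}\in\Hom(\bigwedge^{3}V,V)$, $(\partial f_{1})(\bfe_{i},\bfe_{j},\bfe_{k})=\sum_{\mathrm{cyc}}f_{1}(\bfe_{j}\wedge\bfe_{k})\cdot\bfe_{i}$, vanishes; dually, a coboundary has $f_{1}=0$ and $f_{2}$ of the form $\partial'l_{1}\colon\bfe_{i}\wedge\bfe_{j}\mapsto l_{1}(\bfe_{j})\cdot\bfe_{i}-l_{1}(\bfe_{i})\cdot\bfe_{j}$ for some $l_{1}\in\Hom(V,\bigwedge^{2}V)$. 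Hence
\[
H^{2}(\CC^{n},\fe^{\CC}_{n})\;\cong\;\ker\partial\ \oplus\ \operatorname{coker}\partial',
\]
and since $\fk$ is semisimple the functor of $\fk$-invariants is exact, so it suffices to compute $\ker\partial$ and $\operatorname{coker}\partial'$ on the spaces of $\fk$-equivariant maps.

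Next I would run through the relevant $\fo(n,\CC)$-representation theory. By Schur's lemma $\End(\bigwedge^{2}V)^{\fk}=\CC\cdot\id$ whenever $\fo(n,\CC)$ is simple, i.e. for $n=3$ and for $n\geq5$; the only exception is $n=4$, where $\fo(4,\CC)=\fo(3,\CC)\oplus\fo(3,\CC)$ and $\bigwedge^{2}V=\bigwedge^{2}_{+}V\oplus\bigwedge^{2}_{-}V$ splits into two non-isomorphic $3$-dimensional irreducibles, so that $\End(\bigwedge^{2}V)^{\fk}=\CC\cdot\id\oplus\CC\cdot\ast$ with $\ast$ the Hodge star. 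A short computation with the structure constants gives $\partial(\id)=0$ (this is exactly the identity showing $f(\bfe_{i},\bfe_{j})=\bfl_{ij}$ is a cocycle), while an analogous computation with the Levi-Civita symbol gives $\partial(\ast)\neq0$; hence $\ker(\partial)^{\fk}=\CC\cdot\id$ for every $n\geq3$. For the second summand, $\bigwedge^{2}V\cong\fo(n,\CC)$ contains no copy of $V$ once $n\geq4$ (for $n=4$ its two summands have dimension $3\neq4$), so $\Hom(\bigwedge^{2}V,V)^{\fk}=0$ there; and for $n=3$, where $\bigwedge^{2}\CC^{3}\cong\CC^{3}$, one checks that $\partial'$ sends the canonical isomorphism $V\xrightarrow{\sim}\fo(3,\CC)$ to a nonzero multiple of the canonical isomorphism $\bigwedge^{2}V\xrightarrow{\sim}V$, so $\partial'^{\fk}$ is onto and $\operatorname{coker}(\partial')^{\fk}=0$. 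Combining, $H^{2}(\fe^{\CC}_{n},\fe^{\CC}_{n})$ is one-dimensional, spanned by the class of $f(\bfe_{i},\bfe_{j})=\bfl_{ij}$, for all $n\geq3$. I expect the verification of these two low-rank exceptions ($n=4$ and $n=3$) to be the only delicate point; everything else follows from Schur's lemma and routine Chevalley--Eilenberg bookkeeping for an abelian ideal.

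Finally, to prove integrability and identify the family I would simply exhibit the deformed brackets: let $\fe^{\CC}_{n}(\varepsilon)$ have the same underlying space and the same brackets as $\fe^{\CC}_{n}$ except $[\bfe_{i},\bfe_{j}]_{\varepsilon}=\varepsilon\,\bfl_{ij}$. Checking the Jacobi identity for $\fe^{\CC}_{n}(\varepsilon)$ reduces, modulo the Jacobi identity of $\fe^{\CC}_{n}$, to the two identities $\sum_{\mathrm{cyc}}[\bfe_{i},\bfl_{jk}]=0$ and a cancellation between $[\bfl_{ab},\bfl_{ij}]$ and the terms $[\bfe,[\bfe,\bfl]]$, both immediate from the given structure constants; since no term of order $\varepsilon^{2}$ or higher occurs, $\fe^{\CC}_{n}(\varepsilon)$ is a genuine Lie algebra for every $\varepsilon$, and its infinitesimal part at $\varepsilon=0$ is the generator of $H^{2}(\fe^{\CC}_{n},\fe^{\CC}_{n})$ found above; as $H^{2}$ is one-dimensional, rescaling $\varepsilon$ shows every infinitesimal deformation is integrated by this family. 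For $\varepsilon\neq0$ the substitution $\bfe_{i}\mapsto\sqrt{\varepsilon}\,\bfe_{i}$ identifies $\fe^{\CC}_{n}(\varepsilon)$ with $\fe^{\CC}_{n}(1)$, and setting $\bfl_{i,n+1}:=\bfe_{i}$ one sees that the relations of $\fe^{\CC}_{n}(1)$ are exactly those of $\fo(n+1,\CC)$; since over $\CC$ there is a unique orthogonal Lie algebra of each rank, $\fe^{\CC}_{n}(\varepsilon)\cong\fo(n+1,\CC)$ for all $\varepsilon\neq0$, which completes the plan.
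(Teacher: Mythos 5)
Your argument is correct, and while it shares the paper's overall frame --- reduction via \eqref{iso-cohomology} to invariant classes in $H^{2}\left(\CC^{n},\fe^{\CC}_{n}\right)$, followed by the same explicit integration $[\bfe_{i},\bfe_{j}]_{\varepsilon}=\varepsilon\bfl_{ij}$ and the relabeling $\bfe_{i}=\bfl_{i\,n+1}$ --- the core classification is done by a genuinely different mechanism. The paper analyzes an arbitrary invariant cocycle directly: it shows the image lies entirely in $\fo(n,\CC)$ or entirely in $\CC^{n}$, pins down the $\fo(n,\CC)$-valued case by a centralizer argument inside $\fo(n,\CC)$, and disposes of the $\CC^{n}$-valued case by a dimension count proving that the coboundary map $\Hom(\CC^{n},\fo(n,\CC))\to\Hom\bigl(\bigwedge^{2}\CC^{n},\CC^{n}\bigr)$ is bijective (a stronger statement than you need, since it kills all $\CC^{n}$-valued cochains, not only the invariant ones). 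You instead split cochains along the $\fk$-module decomposition $\fe^{\CC}_{n}\cong\bigwedge^{2}V\oplus V$, identify $H^{2}(\CC^{n},\fe^{\CC}_{n})^{\fk}\cong(\ker\partial)^{\fk}\oplus(\operatorname{coker}\partial')^{\fk}$, and let Schur's lemma do the work, with explicit checks in the two low-rank exceptions: for $n=4$ the extra equivariant map (the Hodge star on $\bigwedge^{2}\CC^{4}$) is excluded by the cocycle condition, and for $n=3$ the identification $\bigwedge^{2}\CC^{3}\cong\CC^{3}$ is handled by showing $\partial'$ is onto on invariants. This is a real advantage of your route: the paper's centralizer identity $\bigl\{\ker(\ad_{\bfe_{i}}|_{\fo(n,\CC)})\cap\ker(\ad_{\bfe_{j}}|_{\fo(n,\CC)})\bigr\}^{\perp}=\CC\cdot\bfl_{ij}$ is really a statement for $n\geq 5$ (for $n=4$ the centralizer of $\bfl_{34}$ also contains $\bfl_{34}$ itself, which is exactly the Hodge-star direction you rule out; for $n=3$ the common kernel is zero), so your representation-theoretic bookkeeping makes the low-rank cases explicit where the paper leaves them implicit. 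Conversely, the paper's dimension count yields the slightly stronger structural fact that every map $\bigwedge^{2}\CC^{n}\to\CC^{n}$ is a coboundary, which is reused in spirit in the proof of theorem \ref{def-Heisenberg}. Your integrability and isomorphism arguments coincide with the paper's.
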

\begin{proof}
Any 2-coboundary necessarily takes values in $\CC^{n}$. Hence, any two invariant 2-cocycles taking values in $\fo(n,\CC)$ are cohomologous if and only if they are equal. 

That $f$ is an invariant element in $Z^{2}\left(\CC^{n},\fe_{n}^{\CC}\right)$ under the $\fo(n,\CC)$-action is a routine computation. By the previous remark, the cohomology class of $f$ is nontrivial.

It remains to show that up to a constant, this is the only possibility for $f$. First, let us assume that $\forall i,j,$ $f(\bfe_{i},\bfe_{j})\in\fo(n,\CC)$, then the invariance condition \eqref{cocycle-inv} implies that for any $g\in\fo(n,\CC)$ such that $[g,\bfe_{i}]=[g,\bfe_{j}]=0$,  $[g,f(\bfe_{i},\bfe_{j})]$ is identically 0. From the structure of $\fo(n,\CC)$, it follows that 
\[
\left\{\ker\left(\ad_{\bfe_{i}}|_{\fo(n,\CC)}\right)\cap\ker\left(\ad_{\bfe_{j}}|_{\fo(n,\CC)}\right)\right\}^{\perp}=\CC\cdot \bfl_{ij},
\]
where the left hand side denotes the subalgebra of $\fo(n,\CC)$ annihilated by $$\ker\left(\ad_{\bfe_{i}}|_{\fo(n,\CC)}\right)\cap\ker\left(\ad_{\bfe_{j}}|_{\fo(n,\CC)}\right).$$ Thus, $f(\bfe_{i},\bfe_{j})=c\bfl_{ij}$ for some $c\in\CC$ is the only invariant cocycle with image lying in $\fo(n,\CC)$.

Now, let us assume that $f$ is an invariant cocycle with $f(\bfe_{i},\bfe_{j})\in\CC^{n}$, for some $i,j$. We claim that the same holds for any other value of $f$. Indeed, for any $k\neq i,j$, the invariance of $f$ under $\bfl_{ik}$ implies that $[\bfl_{ki},f(\bfe_{i},\bfe_{j})]-f(\bfe_{k},\bfe_{j})$ is equal to a 2-coboundary evaluated at $\bfe_{i}\wedge \bfe_{j}$, and therefore $f(\bfe_{k},\bfe_{j})\in\CC^{n}$. Moreover, for any $l\neq i,j,k$, a similar argument on the invariance of $f$ under $\bfl_{jl}$ shows that $f(\bfe_{k},\bfe_{l})\in\CC^{n}$. In conclusion, the image of an invariant 2-cocycle either lies fully in $\fo(n,\CC)$ or in $\CC^{n}$.

To conclude the classification, let us assume that $f:\bigwedge^{2}\CC^{n}\to\CC^{n}$ is an arbitrary linear map. We claim that $f$ is a 2-coboundary. To see this, observe that the linear space of such maps and the space of linear maps $l:\CC^{n}\to\fo(n,\CC)$ are equidimensional, and the coboundary map $l\mapsto [l(\cdot),\cdot]+[\cdot,l(\cdot)]$ is a linear map between these spaces. A basis for the space of linear maps $l:\CC^{n}\to\fo(n,\CC)$ is given by the set $\{l_{ijk}(\bfe_{m})=\delta_{im}\bfl_{jk}\}_{j<k}$. Since
\[
(dl_{ijk})(\bfe_{m},\bfe_{n})=(\delta_{im}\delta_{jn}-\delta_{in}\delta_{jm})\bfe_{k}+(\delta_{in}\delta_{km}-\delta_{im}\delta_{kn})\bfe_{j},
\]
and the collection of the latter maps is obviously linearly independent, we conclude that the coboundary map is an isomorphism. Therefore, all linear maps $f:\bigwedge^{2}\CC^{n}\to\CC^{n}$ are 2-coboundaries, and any invariant 2-cocycle taking values in $\CC^{n}$ is necessarily trivial in cohomology.

To conclude the proof, it is a routine computation to verify that the bracket deformation of $\fe^{\CC}_{n}$ determined by any choice of nontrivial invariant 2-cocycle,
\[
[\bfe_{i},\bfe_{j}]_{\varepsilon}=\varepsilon\bfl_{ij},\quad \varepsilon\in\CC,
\]
and with the rest of brackets kept the same, satisfies the Jacobi identity and defines a Lie algebra $\fe^{\CC}_{n}(\varepsilon)$ that is isomorphic to $\fo(n+1,\CC)$ if $\varepsilon\neq 0$. In particular, the specialization $\varepsilon\in\RR$ leads to the real forms $\fo(n+1)$ for $\varepsilon>0$, and $\fo(n,1)$ for $\varepsilon<0$. 
\end{proof}

\begin{proof}[\textbf{Proof of theorem \ref{def-Heisenberg}}]
The Heisenberg Lie algebra is defined as a central extension of the abelian Lie algebra $\CC^{2n}$, and $\fo(n,\CC)$ acts trivially in the extension term. This implies that $\fg^{\CC}_{n}=\fo(n,\CC)\ltimes \fh^{\CC}_{n}$ is a central extension of the Lie algebra $\fo(n,\CC)\ltimes \CC^{2n}$, and moreover, we have the following commutative diagram:
\[
\xymatrix{
    {0} \ar[r]  &  {\fh^{\CC}_{n}} \ar[r] \ar[d] & {\fg^{\CC}_{n}}  \ar[r] \ar[d] & {\fo(n,\CC)} \ar[r]\ar@{=}[d] & {0}\\
    {0} \ar[r]  &  {\CC^{2n}} \ar[r]  & {\fo(n,\CC)\ltimes\CC^{2n}} \ar[r]  & {\fo(n,\CC)} \ar[r] & {0} 
}
 \]
We claim that there is a $1-1$ correspondence
\[
\left\{\parbox[d]{1.3in}{\centering Infinitesimal deformations of $\fg^{\CC}_{n}$}\right\} \leftrightarrow
\left\{\parbox[d]{1.15in}{\centering Infinitesimal deformations of $\fo(n,\CC)\ltimes \CC^{2n}$}\right\} 
\]
In other words, there is an isomorphism 
\[
H^{2}\left(\fh^{\CC}_{n},\fg^{\CC}_{n}\right)^{\fo(n,\CC)}\cong H^{2}\left(\CC^{2n},\fo(n,\CC)\ltimes\CC^{2n}\right)^{\fo(n,\CC)}.
\]
It will be convenient to consider a canonical basis for $\fh^{\CC}_{n}$, $\{\bfe_{1},\dots,\bfe_{2n},\bfI\}$, with commutation relations 
\[
[\bfe_{i},\bfe_{n+j}]=\delta_{ij}\bfI,\quad i,j=1,\dots,n. 
\]
Any invariant cocycle $f:\bigwedge^{2}\fh^{\CC}_{n}\to\fg^{\CC}_{n}$ is cohomologous to a cocycle $f'$ satisfying $f(\bfe_{i},\bfI)=0$ $\forall i$. To see this, notice that the cocycle condition implies that $[\bfe_{j},f(\bfe_{i},\bfI)]=0$ $\forall i,j$, therefore $f(\bfe_{i},\bfI)=c_{i}\bfI$. The linear map $l:\fh^{\CC}_{n}\to\fg^{\CC}_{n}$ defined as 
\[
l(\bfe_{j})=0,\qquad l(\bfI)=\sum_{i=1}^{n}(c_{i}\bfe_{n+i}-c_{n+i}\bfe_{i}),
\]
satisfies $(dl)(\bfe_{i},\bfI)=f(\bfe_{i},\bfI)$, and the claim follows. Moreover, a similar argument shows that any invariant cocycle is cohomologous to a cocycle with image in $\fo(n,\CC)\ltimes\CC^{2n}$. Indeed, assume that $f(\bfe_{i},\bfe_{j})=c_{ij}\bfI$. Then the linear map $l:\fh^{\CC}_{n}\to\fg^{\CC}_{n}$ given by $l(\bfe_{k})=\delta_{ik}c_{ij}\bfe_{j-n}$ if $j>n$ (or $=-\delta_{ik}c_{ij}\bfe_{j+n}$ if $j\leq n$) satisfies $(dl)(\bfe_{i},\bfe_{j})=c_{ij}\bfI$ and zero otherwise. This concludes the proof of the isomorphism in cohomology. Thus, it is enough to understand the infinitesimal deformations of $\fo(n,\CC)\ltimes\CC^{2n}$.

Lemma \ref{def-Euclidean} can be used in the classification of the independent invariant 2-cocycles $f:\bigwedge\CC^{2n}\to \fo(n,\CC)\ltimes \CC^{2n}$. A similar argument as in the proof of lemma \ref{def-Euclidean} shows that an invariant 2-cocycle which is not a 2-coboundary necessarily has image in $\fo(n,\CC)$. When thought of as a $\fo(n,\CC)$-module, the subalgebra $\CC^{2n}$ splits as a  direct sum $\CC^{n}_{1}\oplus \CC^{n}_{2}$ of invariant $\fo(n,\CC)$-subspaces. There is an induced splitting 
$\bigwedge^{2}\CC^{2n} = \left(\bigwedge^{2}\CC^{n}_{1}\right)\oplus\left(\CC^{n}_{1}\otimes\CC^{n}_{2}\right)\oplus\left(\bigwedge^{2}\CC^{n}_{2}\right)$, and any invariant 2-cocycle can be decomposed into three different components. The classification problem is then reduced to the classification of invariant 2-cocycles on each component. It follows from lemma \ref{def-Euclidean} that the restriction to $\CC_{1}^{n}$ and $\CC_{2}^{n}$ determines two nontrivial one-dimensional spaces of invariant 2-cocycles, spanned by
\begin{eqnarray}
f_{1}(\bfe_{i},\bfe_{j}) &=& \bfl_{ij}\quad \text{for}\;\; i,j\leq n,\;\;\text{and zero otherwise,} \label{eq:f1}\\
f_{2}(\bfe_{n+i},\bfe_{n+j}) &=& \bfl_{ij}\quad \text{for}\;\;\; i,j\leq n, \;\;\text{and zero otherwise.} \label{eq:f2}
\end{eqnarray}
These are the only possibilities that are supported in the invariant subspaces $\bigwedge^{2}\CC^{n}_{1}$ and $\bigwedge^{2}\CC^{n}_{2}$. The remaining possibility would consist of an invariant 2-cocycle supported in $\CC_{1}^{n}\otimes\CC_{2}^{n}$.
There is an obvious choice, namely
\begin{equation}
f_{3}(\bfe_{i},\bfe_{n+j})=\bfl_{ij}\quad \text{for}\;\;\; i,j\leq n, \;\;\text{and zero otherwise.} \label{eq:f3}
\end{equation}
A similar argument as in the proof of lemma \ref{def-Euclidean} shows that any other invariant 2-cocycle supported in $\CC_{1}^{n}\otimes\CC_{2}^{n}$ must be a multiple of $f_{3}$.
Therefore, any nontrivial invariant 2-cocycle is a linear combination of $f_{1}$, $f_{2}$ and $f_{3}$. 

Let $\varepsilon_{1},\varepsilon_{2},\varepsilon_{3}\in\CC$. The lift to $\fg_{n}$ of the Lie bracket deformations of $\fo(n,\CC)\ltimes \CC^{2n}$ induced by the previous cocycles is determined by
\[
[\bfe_{i},\bfe_{j}]_{\varepsilon_{1}}=\varepsilon_{1}\bfl_{ij},\quad [\bfe_{n+i},\bfe_{n+j}]_{\varepsilon_{2}}=\varepsilon_{2}\bfl_{ij},\quad [\bfe_{i},\bfe_{n+j}]_{\varepsilon_{3}}=\delta_{ij}\bfI+\varepsilon_{3}\bfl_{ij}.
\]
and additionally,
\[
[\bfe_{i},\bfI] = \varepsilon_{3}\bfe_{i}-\varepsilon_{1}\bfe_{n+i},\qquad [\bfe_{n+i},\bfI] = \varepsilon_{2}\bfe_{i}-\varepsilon_{3}\bfe_{n+i},
\]
while the remaining basis elements' Lie brackets are unchanged. It is a routine computation to verify that these Lie bracket deformations satisfy the Jacobi identity. Hence they integrate to a three-dimensional family of deformations $\fg^{\CC}_{n}(\varepsilon_{1},\varepsilon_{2},\varepsilon_{3})$. When $\varepsilon_{1},\varepsilon_{2}\neq 0$, and $\varepsilon_{3}^{2}\neq \varepsilon_{1}\varepsilon_{2}$, there is an isomorphism $\fg_{n}^{\CC}(\varepsilon_{1},\varepsilon_{2},\varepsilon_{3})\cong \fg^{\CC}_{n}(1,1,0) = \fo(n + 2,\CC)$. The details on this isomorphism and the full classification of deformations are described in proposition \ref{prop:def-families}.
\end{proof}

\begin{remark}\label{rem:def-quadratic form}
There is yet another way to describe the linear deformations of $\fg^{\CC}_{n}$, and in particular the invariant 2-cocycles generating $H^{2}\left(\fg^{\CC}_{n},\fg^{\CC}_{n}\right)$, in terms of the geometry of the of the quadratic space $\left(\CC^{n+2},Q^{\CC}_{0}\right)$. Any deformation $Q^{\CC}_{\pmb{\varepsilon}}$ of the quadratic form $Q^{\CC}_{0}$ induces a deformation of the orthogonal Lie algebra $\fo\left(\CC^{n+2},Q^{\CC}_{0}\right) \cong \fg^{\CC}_{n}$, in such a way that if a new quadratic form $\tilde{Q}^{\CC}_{\pmb{\varepsilon}}$ is induced by an orthogonal transformation of $Q^{\CC}_{\pmb{\varepsilon}}$, the corresponding deformations $\fg^{\CC}_{n}(\pmb{\varepsilon})$ and $\tilde{\fg}^{\CC}_{n}(\pmb{\varepsilon})$ are isomorphic. 

Consider the canonical basis $\{\bv_{1},\dots,\bv_{n+2}\}$ of $\CC^{n+2}$, together with the correspondence
\[
\bv_{i}\wedge\bv_{j}\mapsto \bfl_{ij},\quad \bv_{i}\wedge\bv_{n+1}\mapsto \bfe_{i}, \quad \bv_{i}\wedge\bv_{n+2}\mapsto \bfe_{n+i}, \quad \bv_{n+1}\wedge\bv_{n+1}\mapsto \bfI
\]
where $1 \leq i < j \leq n$. It is straightforward to verify that the deformations of the corresponding bilinear form of $Q_{0}$ along the totally isotropic plane $W = \mathrm{Span}\{\bv_{n+1},\bv_{n+2}\}$, parametrized as
\[
(\bv_{n+1},\bv_{n+1}) = \varepsilon_{1},\quad (\bv_{n+2},\bv_{n+2}) = \varepsilon_{2},\quad (\bv_{n+1},\bv_{n+2}) = \varepsilon_{3}
\]
induce the deformation $\fg_{n}^{\CC}(\varepsilon_{1},\varepsilon_{2},\varepsilon_{3})$. Therefore, we conclude \emph{a posteriori} that the map  
\[
\left\{\parbox[d]{1.9in}{\centering Deformations of $Q_{0}$ along $W$}\right\} \rightarrow
\left\{\parbox[d]{2.3in}{\centering  $Z^{2}\left(\fh^{\CC}_{n},\fo(n,\CC)\right)^{\fo(n,\CC)}\cong H^{2}(\fg_{n}^{\CC},\fg_{n}^{\CC})$}\right\}
\]
is a bijection.
\end{remark}

\begin{remark}
The special cases $n = 1, 2$ were excluded from the proof since $\fg_{1} = \fh_{1}$ and $\fo(2)$ is abelian. However, it follows from remark \ref{rem:def-quadratic form} that the spaces $H^{2}(\fg_{n},\fg_{n})$ are still three-dimensional when $n = 1, 2$.
\end{remark}

\section{Deformation of special coadjoint orbits}\label{sec:coadjoint}

Let us assume that $n \geq 3$. 
The three-parameter family of deformations of $\fg_{n}$ can be conveniently prescribed as a deformation of the Lie-Poisson structure on a basis for $\fg_{n}^{\vee}$. Let us consider the dual coordinates $\{l_{ij}\}_{1\leq i<j\leq n}$ in $\fo(n)^{\vee}$ with canonical commutation relations
\begin{equation}\label{eq:comm1}
\{l_{ij},l_{kl}\}=\delta_{ik}l_{jl}-\delta_{il}l_{jk} +\delta_{jl}l_{ik}-\delta_{jk}l_{il}
\end{equation}
together with the Darboux coordinates $\{x_{i},p_{i}\}_{i=1}^{n}$, and the central extension coordinate $I$ in $\fh_{n}^{\vee}$, on which the coordinates $l_{ij}$ act as 
\begin{equation}\label{eq:comm2}
\{l_{ij},x_{k}\}=\delta_{ik}x_{j}-\delta_{jk}x_{i},\quad \{l_{ij},p_{k}\}=\delta_{ik}p_{j}-\delta_{jk}p_{i},\quad \{l_{ij},I\}=0.
\end{equation} 
The commutation relations \eqref{eq:comm1}--\eqref{eq:comm2} do not admit nontrivial deformations as a consequence of the simplicity of $\fo(n)$, and in particular, they are not affected by the integration of the cocycles \eqref{eq:f1}--\eqref{eq:f3}. On the other hand, the linear deformations of the induced Lie--Poisson bracket of the chosen basis for $\fg_{n}^{\vee}$ manifest in the remaining commutation relations. Let $\varepsilon_{1}$, $\varepsilon_{2}$ and $\varepsilon_{3}$ be complex parameters corresponding to the cocycles $f_{1}$, $f_{2}$ and $f_{3}$ in $H^{2}\left(\fg^{\CC}_{n},\fg^{\CC}_{n}\right)$. The Lie--Poisson bracket deformations in $\left(\fg_{n}^{\CC}\right)^{\vee}$ take the explicit form 
\begin{equation}\label{def-comm}
\begin{array}{ll}
\{x_{i},x_{j}\} = \varepsilon_{1}l_{ij}, & \{p_{i},p_{j}\} = \varepsilon_{2}l_{ij},\\\\
 \{x_{i},p_{j}\} = \delta_{ij}I+\varepsilon_{3}l_{ij}, &\\\\
\{x_{i},I\}=\varepsilon_{3}x_{i}-\varepsilon_{1}p_{i}, & \{p_{i},I\}=\varepsilon_{2}x_{i}-\varepsilon_{3}p_{i},
\end{array}
\end{equation}

\begin{remark}
Let $\pmb{\varepsilon} = (\varepsilon_{1},\varepsilon_{2},\varepsilon_{3})$. For every $\lambda \in \CC^{*}$, the nonzero triples $\pmb{\varepsilon} = (\varepsilon_{1},\varepsilon_{2},\varepsilon_{3})$ and $\pmb{\varepsilon}' = \lambda\cdot(\varepsilon_{1},\varepsilon_{2},\varepsilon_{3})$ define isomorphic Lie algebras under scaling of generators. Therefore, in order to describe the different isomorphism classes of nontrivial deformations of $\fg_{n}^{\CC}$, it is sufficient to consider them in terms of a stratification of the projective plane $\PP(\pmb{\varepsilon})$. 
\end{remark}

\begin{remark}
Different special values of nonzero triples $\pmb{\varepsilon}$ determine special Lie algebra deformations. By definition, $\fg^{\CC}_{n}(1,1,0) = \fo(n + 2,\CC)$, which is seen under the relabeling $x_{i} = l_{i n + 1}$, $p_{i} = l_{i n + 2}$, $I = l_{n + 1 n + 2}$. Moreover, the Lie algebras $\fg^{\CC}_{n}(1, 0, 0)$ and $\fg^{\CC}_{n}(0,1,0)$ are isomorphic to $\fo(n + 1,\CC)\ltimes\CC^{n + 1}$ under the respective relabelings  $p_{i} = l_{i n + 1}$ and $x_{i} = l_{i n + 1}$. Finally, let $\fd_{n}$ denote the deformation of $\fh_{n}$  corresponding to the triple $(0,0,1)$. Then $\fd^{\CC}_{n}$ is completely characterized by its ideals $\textrm{Span}\{I, x_{i},p_{i}\}\cong \mathfrak{sl}(2,\CC)$, $i=1,\dots,n$, and $\fg^{\CC}_{n}(0,0,1) = \fo(n,\CC)\ltimes \fd^{\CC}_{n}$. 
Let $\cC \subset\PP(\pmb{\varepsilon})$ be the flat conic defined by the equation 
\[
\varepsilon_{3}^{2} = \varepsilon_{1}\varepsilon_{2},
\]
and for $i = 1, 2, 3$, let 
\[
\cL_{i} = \{\varepsilon_{i} = 0\}\subset\PP(\pmb{\varepsilon}).
\]
Then we have that $\cC\cap \cL_{1} = [0 : 1 : 0]$, $\cC\cap \cL_{2} = [1: 0 : 0]$, and $\cL_{1}\cap\cL_{2} = [0:0:1]$. With the exception of the latter, all such special points belong to $\cL_{3}$.
\end{remark}

\begin{proposition}\label{prop:def-families}
The isomorphism classes of nontrivial deformations $\fg^{\CC}_{n}(\pmb{\varepsilon})$ are stratified in the projective plane $\PP(\pmb{\varepsilon})$ as follows:\\

\begin{enumerate}

\item[(i)] 
$\fg^{\CC}_{n}(\pmb{\varepsilon})\cong \fo(n+2,\CC)$, if $[\pmb{\varepsilon}]\in\cU$, where $\cU$ denotes the Zariski open locus 
\[
\cU = \PP(\pmb{\varepsilon})\setminus\{\cC\cup\cL_{1}\cup\cL_{2}\}. 
\]

\item[(ii)]  $\fg^{\CC}_{n}(\pmb{\varepsilon})\cong
\fo(n+1,\CC)\ltimes\CC^{n+1}$ if $[\pmb{\varepsilon}]\in\cC$.\\

\item[(iii)] $\fg^{\CC}_{n}(\pmb{\varepsilon})\cong \fo(n,\CC)\ltimes\fd^{\CC}_{n}$ if $[\pmb{\varepsilon}]\in\left(\cL_{1}\cup\cL_{2}\right)\setminus \cC$. Notice that\\
\[
\left(\cL_{1}\cup\cL_{2}\right)\setminus \cC= (\cL_{1}\setminus [0 : 1 : 0])\cup (\cL_{2}\setminus [1 : 0 : 0]).
\] 
\end{enumerate}
\end{proposition}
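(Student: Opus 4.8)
\noindent The plan is to route the classification through the quadratic-form description of Remark \ref{rem:def-quadratic form} and to treat the non-generic loci by explicit changes of generators in \eqref{def-comm}. By that remark, $\fg^{\CC}_{n}(\pmb{\varepsilon})$ is isomorphic to the orthogonal Lie algebra $\fo(\CC^{n+2},Q^{\CC}_{\pmb{\varepsilon}})$, where the Gram matrix of $Q^{\CC}_{\pmb{\varepsilon}}$ in the basis $\{\bv_{1},\dots,\bv_{n+2}\}$ is block diagonal with an $n\times n$ identity block and the $2\times2$ block $B_{W}(\pmb{\varepsilon})=\ma{\varepsilon_{1}}{\varepsilon_{3}}{\varepsilon_{3}}{\varepsilon_{2}}$ on the plane $W$. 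The first step is the observation that for every $g\in\GL(\CC^{n+2})$ the map $\bigwedge^{2}g$ is a Lie algebra isomorphism $\fo(\CC^{n+2},g^{*}Q)\to\fo(\CC^{n+2},Q)$, since the bracket of the remark transforms covariantly under $\GL(\CC^{n+2})$. Hence the isomorphism type of $\fg^{\CC}_{n}(\pmb{\varepsilon})$ depends only on the $\GL(\CC^{n+2})$-orbit of $Q^{\CC}_{\pmb{\varepsilon}}$, which over $\CC$ is recorded by the integer $\operatorname{rank}Q^{\CC}_{\pmb{\varepsilon}}=n+\operatorname{rank}B_{W}(\pmb{\varepsilon})$; together with the invariance of the isomorphism type under rescaling $\pmb{\varepsilon}\mapsto\lambda\pmb{\varepsilon}$, everything reduces to computing $\operatorname{rank}B_{W}$ over $\PP(\pmb{\varepsilon})$.

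This computation is immediate: as $[\pmb{\varepsilon}]\neq[0:0:0]$ we have $B_{W}\neq0$, so $\operatorname{rank}B_{W}\in\{1,2\}$. On $\cU$ one has $\varepsilon_{1},\varepsilon_{2}\neq0$ and $\det B_{W}=\varepsilon_{1}\varepsilon_{2}-\varepsilon_{3}^{2}\neq0$, so $B_{W}$ — hence $Q^{\CC}_{\pmb{\varepsilon}}$ — is nondegenerate and $\fg^{\CC}_{n}(\pmb{\varepsilon})\cong\fo(n+2,\CC)$, which is (i). On $\cC$ one has $\det B_{W}=0$ while $B_{W}\neq0$, so $\operatorname{rank}Q^{\CC}_{\pmb{\varepsilon}}=n+1$; writing $\CC^{n+2}=V'\oplus\CC w$ with $w$ spanning the one-dimensional radical of $Q^{\CC}_{\pmb{\varepsilon}}$ and $Q^{\CC}_{\pmb{\varepsilon}}|_{V'}$ nondegenerate, one gets $\bigwedge^{2}\CC^{n+2}=\bigwedge^{2}V'\oplus(V'\wedge w)$, where $\bigwedge^{2}V'\cong\fo(n+1,\CC)$ and $V'\wedge w\cong\CC^{n+1}$ is an abelian ideal carrying the standard $\fo(n+1,\CC)$-action, so $\fg^{\CC}_{n}(\pmb{\varepsilon})\cong\fo(n+1,\CC)\ltimes\CC^{n+1}$, which is (ii). The identifications $\fg^{\CC}_{n}(1,1,0)=\fo(n+2,\CC)$ and $\fg^{\CC}_{n}(1,0,0)\cong\fo(n+1,\CC)\ltimes\CC^{n+1}$ recorded before the proposition pin down the normalizations.

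For (iii), and for a constructive version of (i), I would argue directly from \eqref{def-comm}. On $\cU$ one rescales $\bfe_{i}\mapsto\alpha\bfe_{i}$, $\bfe_{n+i}\mapsto\beta\bfe_{n+i}$ — legitimate because $\varepsilon_{1},\varepsilon_{2}\neq0$ — and then shears $\bfe_{n+i}\mapsto\bfe_{n+i}-\gamma\bfe_{i}$ to bring the triple to $(1,1,0)$; the condition $\varepsilon_{3}^{2}\neq\varepsilon_{1}\varepsilon_{2}$ is exactly what keeps the transformed position–momentum brackets from degenerating, so the result is $\fg^{\CC}_{n}(1,1,0)=\fo(n+2,\CC)$ (this generalizes the $n=3$ computation of Leznov–Mostovoy). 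On $(\cL_{1}\cup\cL_{2})\setminus\cC$, say $\varepsilon_{1}=0$ and, after scaling, $\varepsilon_{3}=1$, I would exhibit $\fo(n,\CC)=\operatorname{Span}\{l_{ij}\}$ together with a complementary ideal $\fd^{\CC}_{n}$ — a deformed Heisenberg algebra — and verify that, after a suitable shear of the $\{x_{i},p_{i}\}$, $\fd^{\CC}_{n}$ splits into the $n$ mutually commuting copies of $\mathfrak{sl}(2,\CC)$ spanned by $\{I,x_{i},p_{i}\}$; the case $\varepsilon_{2}=0$ is symmetric. Finally I would check that the three strata carry pairwise non-isomorphic Lie algebras by comparing Levi factors and solvable radicals.

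The step I expect to be the real obstacle is (iii): producing on $(\cL_{1}\cup\cL_{2})\setminus\cC$ the change of generators that simultaneously isolates the $n$ copies of $\mathfrak{sl}(2,\CC)$ and keeps the $\fo(n,\CC)$-action in standard form, and reconciling this semidirect-product picture with the behaviour of $\fg^{\CC}_{n}(\pmb{\varepsilon})$ as $[\pmb{\varepsilon}]$ tends to the boundary of $\cU$. By contrast, the generic stratum and the conic follow essentially formally from the rank computation once the small-rank orthogonal Lie algebras have been identified as above.
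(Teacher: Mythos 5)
Your treatment of strata (i) and (ii) is correct and takes a genuinely different route from the paper: the paper proves (i) and (ii) by exhibiting explicit linear changes of the generators $x_{i},p_{i},I,l_{ij}$ that reduce $\pmb{\varepsilon}$ to the normal forms $(1,1,0)$ and $(\varepsilon_{1},0,0)$, whereas you transport the problem to the Gram block $B_{W}(\pmb{\varepsilon})=\ma{\varepsilon_{1}}{\varepsilon_{3}}{\varepsilon_{3}}{\varepsilon_{2}}$ via remark \ref{rem:def-quadratic form} and use $\GL(\CC^{n+2})$-covariance of the $\bigwedge^{2}$-bracket; your radical splitting on $\cC$ cleanly produces $\fo(n+1,\CC)\ltimes\CC^{n+1}$. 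What the rank argument buys is uniformity; what the paper's transformations buy is an explicit normal form on every stratum, which is what the rest of the paper (corollary \ref{cor:real forms}, remark \ref{rem:symplectic}) actually uses.

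The genuine gap is (iii), and it is deeper than the obstacle you flag. First, the verification you propose cannot be carried out: on $\cL_{1}\setminus\cC$ one has $\{x_{i},p_{j}\}=\delta_{ij}I+\varepsilon_{3}l_{ij}$ with $\varepsilon_{3}\neq 0$, so $\mathrm{Span}\{I,x_{i},p_{i}\}_{i=1}^{n}$ is not closed under the bracket, and ``$n$ mutually commuting copies of $\mathfrak{sl}(2,\CC)$ spanned by $\{I,x_{i},p_{i}\}$'' is impossible in any basis, since distinct simple ideals must intersect trivially yet all would contain $I$. Second, and decisively, your own first-paragraph criterion contradicts the conclusion you plan to draw: on $(\cL_{1}\cup\cL_{2})\setminus\cC$ one has $\det B_{W}=\varepsilon_{1}\varepsilon_{2}-\varepsilon_{3}^{2}=-\varepsilon_{3}^{2}\neq 0$, so $Q^{\CC}_{\pmb{\varepsilon}}$ is nondegenerate there and the rank argument forces $\fg^{\CC}_{n}(\pmb{\varepsilon})\cong\fo(n+2,\CC)$ on this locus as well (indeed the brackets of $\fg^{\CC}_{n}(0,0,1)$ are exactly the graded decomposition $\fo(n,\CC)\oplus\CC\bfI\oplus\CC^{n}\oplus\CC^{n}$ of $\fo(n+2,\CC)$ relative to a hyperbolic plane). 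In particular the algebra on stratum (iii) is simple over $\CC$, so no complementary ideal $\fd^{\CC}_{n}$ exists, and your concluding step ``check that the three strata carry pairwise non-isomorphic Lie algebras by comparing Levi factors and solvable radicals'' would fail between (i) and (iii). This tension is inherited from the paper's own description of $\fd^{\CC}_{n}$, but your plan relies on it essentially. Note that the paper's actual proof of (iii) claims nothing structural: it only applies the shear $p_{i}\mapsto p_{i}-\frac{\varepsilon_{2}}{2\varepsilon_{3}}x_{i}$ to reduce $\pmb{\varepsilon}$ to the normal form $(0,0,1)$, with ``$\fo(n,\CC)\ltimes\fd^{\CC}_{n}$'' serving, by definition, as a name for $\fg^{\CC}_{n}(0,0,1)$. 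So to prove the statement as written you should replace your splitting-and-separation plan for (iii) by that explicit reduction in the coordinates \eqref{def-comm}; if instead you keep the rank route, you must confront the fact that it identifies stratum (iii) with the generic isomorphism class rather than with a new one.
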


\begin{proof}
The proof follows after a systematic implementation of the following fundamental principle: at a special value of $\pmb{\varepsilon}$, all cocycles that haven't been integrated to a deformation become trivial in cohomology, and hence, the remaining deformations become equivalent to of a linear transformation of the basis elements.  

\noindent (i) Let $\pi_{3}:\PP(\pmb{\varepsilon})\setminus \cL_{1}\cap \cL_{2} \to \cL_{3}$ be the projection $\pi_{3}(\varepsilon_{1},\varepsilon_{2},\varepsilon_{3}) = (\varepsilon_{1},\varepsilon_{2},0)$. If $\pmb{\varepsilon}\in\cU$, we have that $\varepsilon_{1}\neq 0$, $\varepsilon_{2}\neq 0$, and $\varepsilon_{3}^{2}/\varepsilon_{1}\varepsilon_{2}\neq 1$. Then, there is an isomorphism $\fg^{\CC}_{n}(\pi_{3}(\pmb{\varepsilon})) \cong \fg^{\CC}_{n}(\pmb{\varepsilon})$ induced by the linear transformation defined by
\[
x_{i}\mapsto x_{i}+\frac{\lambda\varepsilon_{3}}{2\varepsilon_{2}} p_{i}, \quad p_{i} \mapsto p_{i} + \frac{\lambda\varepsilon_{3}}{2\varepsilon_{1}} x_{i}, \quad I\mapsto \left(1-\frac{\lambda^{2}\varepsilon_{3}^{2}}{4\varepsilon_{1}\varepsilon_{2}}\right)I,\quad l_{ij}\mapsto \lambda l_{ij},
\]
where 
\[
\lambda = \frac{2\varepsilon_{1}\varepsilon_{2}}{\epsilon_{3}^{2}}\left(1-\sqrt{1 - \frac{\varepsilon_{3}^{2}}{\varepsilon_{1}\varepsilon_{2}}}\right) = 1 + O\left(\frac{\varepsilon_{3}^{2}}{\varepsilon_{1}\varepsilon_{2}}\right).
\]
In order to show that $\fg^{\CC}_{n}(\pi_{3}(\pmb{\varepsilon}))\cong \fg^{\CC}_{n}(1,1,0)\cong \fo(n+2,\CC)$, let 
\[
x_{i}=\sqrt{\varepsilon_{1}}l_{in+1}, \qquad p_{i} = \sqrt{\varepsilon_{2}}l_{in+2},\qquad I = \sqrt{\varepsilon_{1}\varepsilon_{2}}l_{n+1n+2}.
\]
\noindent (ii)  Assume $[\pmb{\varepsilon}] \in \cC\setminus\{[1 : 0 : 0],[0 : 1 : 0]\}$. An isomorphism $\fg^{\CC}_{n}(\varepsilon_{1},0,0)\cong \fg^{\CC}_{n}(\pmb{\varepsilon})$ is defined by the linear transformation acting as the identity on $x_{i}$, $I$ and $l_{ij}$, and mapping 
\[
p_{i}\mapsto p_{i}+\sqrt{\frac{\varepsilon_{2}}{\varepsilon_{1}}}x_{i}
\]
An analogous isomorphism can be constructed to show that $\fg^{\CC}_{n}(0,1,0)\cong \fg^{\CC}_{n}(\pmb{\varepsilon})$.

\noindent (iii) Assume that $\pmb{\varepsilon}\in\cL_{1}\setminus [0 : 1 : 0]$. The linear transformation defined by 
\[
p_{i}\mapsto p_{i} -\frac{\varepsilon_{2}}{2\varepsilon_{3}}x_{i}, 
\]
and acting as the identity on $x_{i}$, $I$, and $l_{ij}$ defines the isomorphism $\fg^{\CC}_{n}(\pmb{\varepsilon}) \cong \fg^{\CC}_{n}(0,0,1) = \fo(n,\CC)\ltimes\fd^{\CC}_{n}$. An analogous argument implies the result for any $\pmb{\varepsilon}\in\cL_{2}\setminus [1 : 0 : 0]$.
\end{proof}

\begin{corollary}\label{cor:effective-parameters}
Any deformation $\fg^{\CC}_{n}(\pmb{\varepsilon})$ with $[\pmb{\varepsilon}]\in \cU$ depends only on the two effective parameters $\varepsilon_{1},\varepsilon_{2}$.  Any deformation $\fg^{\CC}_{n}(\pmb{\varepsilon})$ with $[\pmb{\varepsilon}]\in \cC $depends only on one effective parameter (either $\varepsilon_{1}$ or $\varepsilon_{2}$). Any deformation $\fg^{\CC}_{n}(\pmb{\varepsilon})$ with $[\pmb{\varepsilon}]\in \left(\cL_{1}\cup\cL_{2}\right)\setminus \cC$ depends only on the effective parameter $\varepsilon_{3}$.
\end{corollary}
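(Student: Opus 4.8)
The plan is to read the corollary off directly from the isomorphisms constructed in the proof of Proposition \ref{prop:def-families}, after fixing what ``effective parameter'' should mean. Each isomorphism produced there is a \emph{fibrewise-linear} change of the basis $\{x_i,p_i,I,l_{ij}\}$ of $(\fg^{\CC}_n)^{\vee}$ whose coefficients are rational functions of $\pmb{\varepsilon}$ on the relevant stratum; consequently it is an isomorphism of the deformations \eqref{def-comm}, not merely of the underlying abstract Lie algebras, and its effect is to normalize $\pmb{\varepsilon}$ to a distinguished representative of its stratum in which some of the $\varepsilon_i$ vanish. I will call a coordinate $\varepsilon_i$ \emph{inessential} on a stratum when such a normalization identifies $\fg^{\CC}_n(\pmb{\varepsilon})$ with the member of the family obtained by setting that coordinate to zero; the surviving coordinates are then the effective parameters, and the corollary amounts to listing them stratum by stratum.

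For the three cases I would argue as follows. On $\cU = \PP(\pmb{\varepsilon})\setminus\{\cC\cup\cL_1\cup\cL_2\}$ a representative has $\varepsilon_1\varepsilon_2\neq 0$ and $\varepsilon_3^2\neq\varepsilon_1\varepsilon_2$, so the scalar $\lambda$ of the proof of Proposition \ref{prop:def-families}(i) is defined and nonzero and the associated linear transformation is invertible; it carries $\fg^{\CC}_n(\pmb{\varepsilon})$ to $\fg^{\CC}_n(\pi_3(\pmb{\varepsilon})) = \fg^{\CC}_n(\varepsilon_1,\varepsilon_2,0)$, so $\varepsilon_3$ is inessential and only $\varepsilon_1,\varepsilon_2$ remain. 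On $\cC$, for a representative with $\varepsilon_1\neq 0$ the transformation $p_i\mapsto p_i+\sqrt{\varepsilon_2/\varepsilon_1}\,x_i$ of Proposition \ref{prop:def-families}(ii) gives $\fg^{\CC}_n(\pmb{\varepsilon})\cong\fg^{\CC}_n(\varepsilon_1,0,0)$, leaving $\varepsilon_1$ as the only effective parameter; the symmetric transformation handles $\varepsilon_2\neq 0$, and the two points $[1:0:0]$ and $[0:1:0]$ are already in this reduced form. On $(\cL_1\cup\cL_2)\setminus\cC$, say on $\cL_1\setminus[0:1:0]$ so that $\varepsilon_1 = 0$ and $\varepsilon_3\neq 0$, the transformation $p_i\mapsto p_i-\tfrac{\varepsilon_2}{2\varepsilon_3}x_i$ of Proposition \ref{prop:def-families}(iii) gives $\fg^{\CC}_n(\pmb{\varepsilon})\cong\fg^{\CC}_n(0,0,\varepsilon_3)$ (isomorphic in turn to $\fg^{\CC}_n(0,0,1) = \fo(n,\CC)\ltimes\fd^{\CC}_n$ by the scaling remark), so $\varepsilon_1$ and $\varepsilon_2$ are both inessential and $\varepsilon_3$ is the only effective parameter; the case $\cL_2\setminus[1:0:0]$ is symmetric.

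I do not expect a genuine obstacle here: the nontrivial computations --- that the listed changes of basis fix the brackets among the $l_{ij}$ and the cross brackets with $x_i,p_i,I$, and send the deformed brackets \eqref{def-comm} to the claimed reduced ones --- are precisely those already performed in the proof of Proposition \ref{prop:def-families}. The points to watch are bookkeeping in nature: checking that each transformation is defined and invertible on exactly the stratum where it is used (this is why the conditions $\varepsilon_1\varepsilon_2\neq 0$ and $\varepsilon_3^2\neq\varepsilon_1\varepsilon_2$ cutting out $\cU$ are needed for $\lambda$), and making sure the normalizations stay fibrewise linear so that they descend to isomorphisms of deformation families. The statement is an upper bound (``depends only on''), so no non-isomorphism argument is required; that the resulting counts --- two on $\cU$, one elsewhere --- cannot be lowered is consistent with the stratification of Proposition \ref{prop:def-families} and with the bound on the effective deformation family announced in the introduction.
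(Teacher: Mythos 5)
Your proposal is correct and matches the paper's (implicit) argument: the corollary is stated without a separate proof precisely because it is read off from the explicit normalizing isomorphisms constructed in the proof of Proposition \ref{prop:def-families}, namely $\fg^{\CC}_{n}(\pmb{\varepsilon})\cong\fg^{\CC}_{n}(\varepsilon_{1},\varepsilon_{2},0)$ on $\cU$, $\cong\fg^{\CC}_{n}(\varepsilon_{1},0,0)$ or $\fg^{\CC}_{n}(0,\varepsilon_{2},0)$ on $\cC$, and $\cong\fg^{\CC}_{n}(0,0,\varepsilon_{3})$ on $(\cL_{1}\cup\cL_{2})\setminus\cC$, exactly as you describe. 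Your added bookkeeping (well-definedness and invertibility of the transformations on each stratum, e.g.\ the removable singularity of $\lambda$ at $\varepsilon_{3}=0$) is a sound refinement of the same route rather than a different one.
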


\begin{remark}
From now on, we will assume that the deformation parameters $\pmb{\varepsilon}=(\varepsilon_{1},\varepsilon_{2},\varepsilon_{3})$ are real, unless otherwise stated. 
\end{remark}

In order to describe the different real forms of the deformations $\fg_{n}(\pmb{\varepsilon})$ that arise by restriction to $\RR$, it is necessary to consider instead a stratification of $\RR^{3}\setminus\{(0,0,0)\}$. The lift $\mathrm{pr}^{-1}\left(\cC|_{\RR}\right)\subset\RR^{3}\setminus\{(0,0,0)\}$ has two connected components $\cC_{+}$, $\cC_{-}$, depending on whether $\varepsilon_{1},\varepsilon_{2}\geq 0$ or $\varepsilon_{1},\varepsilon_{2}\leq 0$. $\mathrm{pr}^{-1}\left(\left(\left(\cL_{1}\cup\cL_{2}\right)\setminus\cC\right) |_{\RR}\right)$ will be denoted by $\cL$ (although it possesses two connected components, the corresponding real forms are isomorphic). The set $\RR^{3}\setminus\left\{(\cC_{+}\cup\cC_{-}\cup \cL\right\}$ can be decomposed as 
\[
\cR_{++}\cup\cR_{--}\cup\cR_{+-},
\]
with the regions $\cR_{++}$ and $\cR_{--}$ characterized by the conditions $\varepsilon_{1}, \varepsilon_{2} > 0$ and $\varepsilon_{1}, \varepsilon_{1}<0$, respectively (each region consisting of 3 connected components). The remaining region $\cR_{+-}$ consists of all triples $\pmb{\varepsilon}$ for which either $\varepsilon_{1} > 0$, $\varepsilon_{2} < 0$ or $\varepsilon_{1} < 0$, $\varepsilon_{2} > 0$.

\begin{corollary}\label{cor:real forms}
In terms of the previous stratification of $\RR^{3}\setminus\{(0,0,0)\}$, the isomorphism type of the real forms $\fg_{n}(\pmb{\varepsilon})$ for $\pmb{\varepsilon}\neq (0,0,0)$ is 
\[
\left\{
\begin{array}{rcl}
\fo(n+2) & \text{if} & \pmb{\varepsilon}\in\cR_{++},\\\\
\fo(n+1,1) & \text{if}  & \pmb{\varepsilon}\in\cR_{+-},\\\\
\fo(n,2) & \text{if} & \pmb{\varepsilon}\in\cR_{--},\\\\
 \fo(n+1)\ltimes\RR^{n+1} & \text{if} & \pmb{\varepsilon}\in\cC_{+},\\\\
\fo(n,1)\ltimes \RR^{n+1} & \text{if} & \pmb{\varepsilon}\in\cC_{-},\\\\
\fo(n)\ltimes\mathfrak{d}_{n}|_{\RR} & \text{if} & \pmb{\varepsilon}\in\cL.
\end{array}
\right.
\]
\end{corollary}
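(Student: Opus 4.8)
The strategy is to rerun the proof of Proposition~\ref{prop:def-families} over $\RR$, keeping track of which normalizing changes of basis have real coefficients, and to identify the resulting real form by an elementary invariant: the signature of an associated quadratic form in the semisimple cases, and the isomorphism type of a quotient by an abelian ideal in the others. Since $\fg_{n}(\pmb{\varepsilon})$ is by construction a real form of $\fg_{n}^{\CC}(\pmb{\varepsilon})$, Proposition~\ref{prop:def-families} already fixes the list of candidates in each stratum, so only the real structure is at issue.

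Two of the strata are immediate. For $\pmb{\varepsilon}\in\cL$, the isomorphism $\fg_{n}^{\CC}(\pmb{\varepsilon})\cong\fg_{n}^{\CC}(0,0,1)$ of Proposition~\ref{prop:def-families}(iii) is realized by $p_{i}\mapsto p_{i}-\tfrac{\varepsilon_{2}}{2\varepsilon_{3}}x_{i}$ (or its $x\!\leftrightarrow\! p$ mirror on $\cL_{2}$), whose coefficients are rational in $\pmb{\varepsilon}$ and free of square roots; it descends to $\RR$ unchanged, so $\fg_{n}(\pmb{\varepsilon})\cong\fg_{n}(0,0,1)\cong\fo(n)\ltimes\fd_{n}|_{\RR}$ throughout $\cL$, which also accounts for its two components yielding the same real form. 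For $[\pmb{\varepsilon}]\in\cC$ one has $\varepsilon_{1}\varepsilon_{2}=\varepsilon_{3}^{2}\geq 0$, so $\varepsilon_{2}/\varepsilon_{1}>0$ off the coordinate axes and the reduction $p_{i}\mapsto p_{i}+\sqrt{\varepsilon_{2}/\varepsilon_{1}}\,x_{i}$ of Proposition~\ref{prop:def-families}(ii) is real (the two points $[1:0:0]$, $[0:1:0]$ being covered by the mirror normalization); it then remains to read off $\fg_{n}(\varepsilon_{1},0,0)|_{\RR}$ from \eqref{def-comm} with $\varepsilon_{2}=\varepsilon_{3}=0$, where the span of $\{p_{1},\dots,p_{n},I\}$ is an abelian ideal $\cong\RR^{n+1}$ with complementary subalgebra the deformed Euclidean algebra $\fe_{n}(\varepsilon_{1})$, equal by Lemma~\ref{def-Euclidean} to $\fo(n+1)$ for $\varepsilon_{1}>0$ and to $\fo(n,1)$ for $\varepsilon_{1}<0$. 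As $\cC_{+}$ lies over $\varepsilon_{1},\varepsilon_{2}\geq 0$ and $\cC_{-}$ over $\varepsilon_{1},\varepsilon_{2}\leq 0$, this produces the two Euclidean-type classes.

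On the open locus, where $[\pmb{\varepsilon}]\in\cU$ and $\fg_{n}^{\CC}(\pmb{\varepsilon})\cong\fo(n+2,\CC)$, the real form is one of $\fo(n+2)$, $\fo(n+1,1)$, $\fo(n,2)$, and I would determine it via the construction of Remark~\ref{rem:def-quadratic form}, which is visibly defined over $\RR$: $\fg_{n}(\pmb{\varepsilon})$ is the orthogonal Lie algebra of $\RR^{n+2}$ with bilinear form whose Gram matrix in the distinguished basis is the block sum of $I_{n}$ with $B_{\pmb{\varepsilon}}=\ma{\varepsilon_{1}}{\varepsilon_{3}}{\varepsilon_{3}}{\varepsilon_{2}}$, so its isomorphism type is $\fo(p,q)$ with $(p,q)=(n,0)$ plus the signature of $B_{\pmb{\varepsilon}}$. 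The statement then reduces to evaluating this signature on each of $\cR_{++}$, $\cR_{--}$, $\cR_{+-}$, in terms of the signs of $\varepsilon_{1},\varepsilon_{2}$ and of $\det B_{\pmb{\varepsilon}}=\varepsilon_{1}\varepsilon_{2}-\varepsilon_{3}^{2}$.

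The delicate point — and the main obstacle — is precisely this last evaluation, since the reductions available over $\CC$ do not all descend. The $\lambda$-transformation of Proposition~\ref{prop:def-families}(i) involves $\sqrt{1-\varepsilon_{3}^{2}/(\varepsilon_{1}\varepsilon_{2})}$ and the subsequent relabeling $x_{i}=\sqrt{\varepsilon_{1}}\,l_{in+1}$, $p_{i}=\sqrt{\varepsilon_{2}}\,l_{in+2}$ involves $\sqrt{\varepsilon_{1}},\sqrt{\varepsilon_{2}}$, all real only where the radicands are positive; on the complementary part of each open region one should instead invoke the $\GL(2,\RR)$-action on the pairs $(x_{i},p_{i})$ that acts identically in the index $i$ and commutes with $\fo(n)$. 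A direct computation shows this action transforms the parameters through the congruence $B_{\pmb{\varepsilon}}\mapsto MB_{\pmb{\varepsilon}}M^{\mathsf{T}}$ for $M\in\GL(2,\RR)$, hence preserves the signature of $B_{\pmb{\varepsilon}}$ and lets one bring $\pmb{\varepsilon}$ to a diagonal normal form within its orbit; matching these orbits against the stated partition of $\RR^{3}\setminus\{0\}$ — in particular confirming which open stratum carries which signature of $B_{\pmb{\varepsilon}}$ — is the real content of the argument, the verification of the bracket relations being the routine part already performed over $\CC$.
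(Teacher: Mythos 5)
Your mechanism for the open strata — realizing $\fg_{n}(\pmb{\varepsilon})$ over $\RR$ as the orthogonal algebra of the Gram matrix $I_{n}\oplus B_{\pmb{\varepsilon}}$, $B_{\pmb{\varepsilon}}=\ma{\varepsilon_{1}}{\varepsilon_{3}}{\varepsilon_{3}}{\varepsilon_{2}}$, via remark \ref{rem:def-quadratic form} — is the right one, and your treatment of $\cC_{\pm}$ and $\cL$ is essentially the real specialization of the normalizations in proposition \ref{prop:def-families} (with the harmless caveat that on $\cC$ one should take $c=-\varepsilon_{3}/\varepsilon_{1}$ rather than a fixed branch of $\sqrt{\varepsilon_{2}/\varepsilon_{1}}$, so the reduction is rational and real). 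The genuine gap is that the one step you explicitly defer, "confirming which open stratum carries which signature of $B_{\pmb{\varepsilon}}$", is where all the remaining content sits, and it is not a formality: the signature of $B_{\pmb{\varepsilon}}$ is \emph{not} determined by the signs of $\varepsilon_{1},\varepsilon_{2}$ alone. $B_{\pmb{\varepsilon}}$ is positive (resp.\ negative) definite exactly when $\varepsilon_{1}>0$ (resp.\ $\varepsilon_{1}<0$) \emph{and} $\det B_{\pmb{\varepsilon}}=\varepsilon_{1}\varepsilon_{2}-\varepsilon_{3}^{2}>0$, and it is indefinite whenever $\varepsilon_{3}^{2}>\varepsilon_{1}\varepsilon_{2}$, irrespective of those signs.

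Consequently, carrying out the deferred evaluation does not simply reproduce the displayed list over the regions as you (and the statement) describe them: the set $\{\varepsilon_{1},\varepsilon_{2}>0\}\setminus\cC_{+}$ has three components, and on the two outer ones (e.g.\ at $\pmb{\varepsilon}=(1,1,2)$) your own criterion gives signature $(n+1,1)$, hence $\fo(n+1,1)$, not $\fo(n+2)$; one can check independently that $\fg_{n}(1,1,2)$ is not of compact type, since $\ad_{\bfI}$ acts on $\mathrm{Span}\{\bfe_{i},\bfe_{n+i}\}$ with real eigenvalues $\pm\sqrt{3}$. The same happens, mutatis mutandis, for $\cR_{--}$ versus $\fo(n,2)$. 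So to close the argument you must actually perform the signature computation and then reconcile it with the stratification: either the compact and $\fo(n,2)$ cases are asserted only on the definiteness locus $\varepsilon_{3}^{2}<\varepsilon_{1}\varepsilon_{2}$ inside $\cR_{++}$ and $\cR_{--}$ (i.e.\ the regions must be read as the loci where $B_{\pmb{\varepsilon}}$ is definite, which are connected, not three-component sets), or the discrepancy with the sign-only description has to be flagged. Your $\GL(2,\RR)$-congruence remark is correct and shows the signature of $B_{\pmb{\varepsilon}}$ is the complete invariant on the open locus, but it makes the omitted stratum-by-stratum evaluation more, not less, indispensable.
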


\subsection{Special coadjoint orbits}

The rank of a semi-simple Lie algebra $\fg$ is equal to the dimension of the center of its universal enveloping algebra $U(\fg)$---a space generated by the so-called \emph{Casimir invariants}. $Z(U(\fg))$ can be equivalently described in terms of the Lie-Poisson structure in $C^{\infty}(\fg^{\vee})$. For $\fo(n + 2,\CC)$ and the dual basis $\{l_{ij}\}_{1\leq i<j\leq n+2}$ with Poisson brackets \eqref{eq:comm1}, the Casimir invariants can be determined explicitly as the homogeneous polynomials
\[
C_{2k}=\tr\left(\mathrm{L}^{2k}\right),\quad k=1,\dots,\lfloor n/2\rfloor+1,
\]
where $\mathrm{L} = (l_{ij})$. The choice of values for the Casimir invariants determines all the coadjoint orbits of maximal dimension in the orbit stratification of $\fo(n + 2,\CC)^{\vee}$, isomorphic to the quotient of $\mathrm{O}(n+2,\CC)$ by a maximal torus. There is an analogous description of the coadjoint orbits in $\fo(n+2,\CC)^{\vee}$ isomorphic to the homogeneous space $\mathrm{O}(n+2,\CC)/\SO(2,\CC)\times\mathrm{O}(n,\CC)$, and which are the \emph{minimal} nontrivial orbits when $n\neq 2,4$ \cite{Wolf78}. The next result is described in \cite{BS-97}. 

\begin{lemma}[\cite{BS-97}]\label{lemma:minimal}
The $2n$-dimensional coadjoint orbits in $\fo(n+2)^{\vee}$ are isomorphic to the homogeneous space $\mathrm{SO}(n+2)/\mathrm{SO}(2)\times\mathrm{SO}(n)$ and form a 1-dimensional algebraic family determined by the collection of quadratic equations
\begin{equation}
C_{2} = -2r^{2},\label{K2}
\end{equation}
\begin{equation}
l_{i_{1}i_{2}}l_{i_{3}i_{4}} = l_{i_{1}i_{3}}l_{i_{2}i_{4}}-l_{i_{1}i_{4}}l_{i_{2}i_{3}},\quad 1\leq i_{1}<i_{2}<i_{3}<i_{4}\leq n+2.\label{Plucker}
\end{equation}
\end{lemma}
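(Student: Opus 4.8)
The plan is to use the compactness of $\SO(n+2)$ to replace coadjoint orbits by adjoint orbits, to classify the latter by the spectral normal form of skew-symmetric matrices, and finally to recognize the rank-two orbits inside the common zero locus of \eqref{K2}--\eqref{Plucker} by means of the Plücker embedding.

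First I would identify $\fo(n+2)^{\vee}$ with $\fo(n+2)$, the space of real skew-symmetric $(n+2)\times(n+2)$ matrices $\mathrm{L}=(l_{ij})$, via the negative-definite Killing form; this identification is $\SO(n+2)$-equivariant, carries coadjoint orbits to conjugacy classes, realizes the Lie--Poisson brackets of the $l_{ij}$ as \eqref{eq:comm1}, and gives $C_{2}=\tr(\mathrm{L}^{2})=-2\sum_{i<j}l_{ij}^{2}$. By the real spectral theorem every such $\mathrm{L}$ is $\SO(n+2)$-conjugate to a block-diagonal matrix with $2\times2$ blocks $r_{k}\,\ma{0}{1}{-1}{0}$, $r_{k}>0$, together with a zero block, and the conjugacy class is determined by the multiset $\{r_{k}\}$. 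A stabilizer computation---its Lie algebra is a sum of $\mathfrak{u}(m_{\lambda})$-summands, one for each eigenvalue pair $\pm i r$ of multiplicity $m_{\lambda}$, plus an orthogonal summand acting on $\ker\mathrm{L}$---shows that an orbit through a matrix of rank exactly two, with normal form $r\,\ma{0}{1}{-1}{0}\oplus 0$ and $r>0$, has dimension $2n=2\big((n+2)-2\big)$; these are the minimal nonzero orbits for $n\neq 2,4$ (cf.\ \cite{Wolf78}), and for the relevant range of $n$ they exhaust the $2n$-dimensional ones (cf.\ \cite{BS-97}). They form a one-parameter family indexed by $r>0$, equivalently by the value $C_{2}=-2r^{2}$ of \eqref{K2}. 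The stabilizer of such an $\mathrm{L}$ consists of the rotations of its two-plane together with the rotations of the orthogonal complement, i.e.\ it is $\SO(2)\times\SO(n)$, so the orbit is the homogeneous space $\SO(n+2)/\SO(2)\times\SO(n)$.

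It remains to cut this family out by \eqref{Plucker}. Associate to $\mathrm{L}$ the bivector $\omega_{\mathrm{L}}=\sum_{i<j}l_{ij}\,\bv_{i}\wedge\bv_{j}\in\bigwedge^{2}\RR^{n+2}$. Then $\operatorname{rank}\mathrm{L}\le 2$ if and only if $\omega_{\mathrm{L}}$ is decomposable, if and only if $\omega_{\mathrm{L}}\wedge\omega_{\mathrm{L}}=0$; expanding this last identity in the basis $\{\bv_{i_{1}}\wedge\bv_{i_{2}}\wedge\bv_{i_{3}}\wedge\bv_{i_{4}}\}$ produces exactly the quadratic relations \eqref{Plucker}. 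Hence the zero locus of \eqref{Plucker} is the affine cone of decomposable bivectors, and intersecting it with \eqref{K2}, which fixes $\|\omega_{\mathrm{L}}\|^{2}=-\tfrac{1}{2}C_{2}=r^{2}$, leaves exactly the decomposable bivectors of norm $r$. Such a bivector is $u\wedge v$ with $\{u,v\}$ spanning an oriented two-plane of area $r$, so the resulting locus is a copy of the oriented Grassmannian $\widetilde{\mathrm{Gr}}_{2}(\RR^{n+2})=\SO(n+2)/\SO(2)\times\SO(n)$; it is $\SO(n+2)$-invariant (both $C_{2}$ and decomposability are) and connected, hence it coincides---as a symplectic manifold, with the Kirillov--Kostant--Souriau form---with the orbit identified above.

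The main obstacle is the dimension bookkeeping in the middle step: to know that \emph{every} $2n$-dimensional orbit is a rank-two orbit, one must check that the most degenerate higher-rank classes (several $r_{k}$ coinciding, which enlarges the unitary factors of the stabilizer) still have orbit dimension strictly greater than $2n$, and this is precisely where the low-dimensional exceptional isomorphisms of $\fo(n+2)$ intervene and account for the caveat on $n$ in the minimality statement quoted from \cite{Wolf78}. Away from those values the inequality is strict, while the intersection \eqref{K2}--\eqref{Plucker} isolates the Grassmannian family in all cases. The equivalences ``rank $\le 2$ $\Leftrightarrow$ decomposable $\Leftrightarrow$ \eqref{Plucker}'' and the identification of the resulting locus with $\widetilde{\mathrm{Gr}}_{2}(\RR^{n+2})$ together with its canonical symplectic form are classical, so the remaining content is the careful assembly of these facts, carried out in \cite{BS-97}.
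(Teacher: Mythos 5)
The paper itself gives no proof of this lemma---it is quoted from \cite{BS-97}---and your route (passing from coadjoint to adjoint orbits via the invariant form, the spectral normal form for skew-symmetric matrices, the stabilizer computation giving $\mathrm{SO}(2)\times\mathrm{SO}(n)$ for a rank-two element, and the chain ``rank $\le 2$ $\Leftrightarrow$ decomposable $\Leftrightarrow$ \eqref{Plucker}'' with \eqref{K2} fixing the scale $r$) is exactly the standard argument behind the citation; that core of your write-up is sound.

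The genuine gap is the exhaustion step, which you flag as ``the main obstacle'' but then dispatch with an incorrect claim. It is not true that for $n\neq 2,4$ every higher-rank conjugacy class has dimension strictly greater than $2n$: for $n=3$ the class in $\fo(5)$ with two \emph{equal} nonzero frequencies, $r\bigl(\ma{0}{1}{-1}{0}\oplus\ma{0}{1}{-1}{0}\bigr)\oplus 0$, has stabilizer $\mathrm{U}(2)$ and hence dimension $10-4=6=2n$, and likewise for $n=6$ the orbit $\mathrm{SO}(8)/\mathrm{U}(4)$ has dimension $12=2n$ (a short computation with stabilizers of the form $\prod_i\mathrm{U}(m_i)\times\mathrm{SO}(z)$ shows a single frequency of multiplicity $m$ gives dimension $2n$ exactly when $2n=3m$). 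Note that $n=3$ is precisely the Leznov--Mostovoy case, so this cannot be brushed aside; also the caveat $n\neq 2,4$ from \cite{Wolf78} concerns \emph{minimality} (at $n=4$ the orbit $\mathrm{SO}(6)/\mathrm{U}(3)$ has dimension $6<2n$), not the dimension bookkeeping you need. What rescues the lemma as it is actually used is the half of your argument that is correct: the relations \eqref{Plucker} force the bivector $\sum_{i<j}l_{ij}\,\bv_{i}\wedge\bv_{j}$ to be decomposable, i.e.\ $\mathrm{L}$ to have rank two, so the locus \eqref{K2}--\eqref{Plucker} consists exactly of the rank-two orbits $\cong\mathrm{SO}(n+2)/\mathrm{SO}(2)\times\mathrm{SO}(n)=\widetilde{\mathrm{Gr}}_{2}\left(\RR^{n+2}\right)$ indexed by $r>0$, while the extra $2n$-dimensional orbits such as $\mathrm{SO}(5)/\mathrm{U}(2)$ violate \eqref{Plucker}. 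So either read (and prove) the lemma as characterizing the orbits cut out by the equations, or supply the explicit stabilizer-dimension count and exclude the coincidental values $n=3,6$ from the literal exhaustion claim.
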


\begin{remark}\label{rem:Gr^{+}_{2}(n+2)}
The set of quadratic equations \eqref{K2}--\eqref{Plucker} identify the given coadjoint orbits with the Grassmannian $\widetilde{\mathrm{Gr}}_{2}\left(\RR^{n + 2}\right)$ of oriented 2-planes in $\RR^{n + 2}$, as they can be understood as a $2:1$ lift of the classical Pl\"ucker embedding. The Pl\"ucker relations indicate that equations \eqref{Plucker} are overdetermined, and can be generated by any subcollection of $n\choose{2}$ equations containing a given fixed element $l_{ij}$, i.e. $l_{n+1n+2}$.
\end{remark}

Applying corollary \ref{cor:real forms} and lemma \ref{lemma:minimal} to the generic deformations $\fg_{n}(\pmb{\varepsilon})$, $\pmb{\varepsilon}\in\RR^{3}$, and letting $x_{i} = l_{i n + 1}$, $p_{i} = l_{i n + 2}$, and $I = l_{n+1 n+2}$,  equation \eqref{K2} becomes
\begin{equation}\label{C1}
C_{2}= -2\left(I^{2}+\varepsilon_{1}x^{2}+\varepsilon_{2}p^{2}-2\varepsilon_{3}xp-(\varepsilon_{3}^{2}
-\varepsilon_{1}\varepsilon_{2})l^{2}\right),
\end{equation}
where 
\[
x^{2} = \sum_{i = 1}^{n} x_{i}^{2},\qquad  p^{2}  = \sum_{i = 1}^{n} p_{i}^{2},\qquad xp = \sum_{i = 1}^{n} x_{i}p_{i}, \qquad l^{2} = \sum_{1 \leq i < j \leq n} l^{2}_{ij}. 
\]
The remaining equations do not depend on the deformation parameters. We emphasize the ones containing $\{I,x_{i},p_{j}\}$, 
\begin{equation}\label{C2}
Il_{ij} = x_{i}p_{j}-x_{j}p_{i},
\end{equation}
\begin{equation}\label{C3}
l_{ij}x_{k} - l_{ik}x_{j} + l_{jk}x_{i} = 0,\qquad l_{ij}p_{k} - l_{ik}p_{j} + l_{jk}p_{i}=0.
\end{equation}
Notice that the subcollection \eqref{C2} generalizes the usual definition of angular momentum and generate \eqref{Plucker}, while equations \eqref{C3} generalize the vector analysis relations $\mathbf{l}\cdot\mathbf{x}=\mathbf{l}\cdot\mathbf{p}=0$.

\begin{definition}
The coadjoint orbits $\OO_{2n}(\pmb{\varepsilon})\subset \fg_{n}(\pmb{\varepsilon})^{\vee}$ are the special $2n$-dimensional orbits defined by the choice of value $C_{2} = -2$ in equation \eqref{C1}.
\end{definition}

\begin{remark}\label{rem:2-1 cover}
It follows from remarks \ref{rem:def-quadratic form} and \ref{rem:Gr^{+}_{2}(n+2)} that over the open set $\cR_{++}$, 
\eqref{C1}--\eqref{C2} correspond to the equations that determine a $2:1$ lift of the Pl\"ucker embedding, identifying $\OO_{2n}(\pmb{\varepsilon})$ with the Grassmannian of oriented planes in 
$\RR^{n+2}$. 
The different solutions of the quadratic equation \eqref{C1} correspond to the different choices of orientation of a given 2-plane in $\RR^{n+2}$. If we consider the degeneration $\pmb{\varepsilon} \to {0}$, the limiting equations 
\begin{equation}\label{eq:orbit-phase-space}
I = \pm 1\qquad \text{and} \qquad \pm l_{ij} = x_{i}p_{j} - x_{j} p_{i},
\end{equation}
define two disjoint orbits $\OO^{+}_{2n}$ and $\OO^{-}_{2n}$ in $\fg_{n}^{\vee}$, each isomorphic to $\RR^{n}\oplus\RR^{n}$. In turn, the degeneration of the canonical symplectic structure determined by the Kirillov--Konstant--Souriau symplectic form \cite{Reyman,Kirillov} on $\OO_{2n}(\pmb{\varepsilon})$ corresponds to the symplectic structure on standard phase space
\[
\left(\RR^{n}\oplus\RR^{n}, \sum_{i = 1}^{n}dx_{i}\wedge dp_{i}\right)
\]
for each of the two orbits in $\fg_{n}^{\vee}$. The existence of two limiting connected components corresponds to the limiting degenerations in the work of Higgs \cite{Higgs78} for the cotangent bundles $T^{*}S^{n}$ as the sectional curvature is allowed to vanish (cf. remark \ref{rem:cotangent}).
\end{remark}

\begin{corollary}\label{cor:deformations}
For any $\pmb{\varepsilon}\in\RR^{3}\setminus\{(0,0,0)\}$, the special coadjoint orbits  $\OO_{2n}(\pmb{\varepsilon})$ 
define a family of deformations of standard phase space, and 
carry a canonical ``angular momentum" representation of $\fo(n)$ in $C^{\infty}(\OO_{2n}(\pmb{\varepsilon}))$ (cf. \cite{LM-03}). These orbits are diffeomorphic to the Grassmannian of oriented 2-planes $\widetilde{\mathrm{Gr}}_{2}\left(\RR^{n + 2}\right)$ 
over the open region $\cR_{++}$. 
 
\end{corollary}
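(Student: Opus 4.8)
The plan is to assemble the results of this section; the only step requiring genuine work is the identification with the oriented Grassmannian, which I discuss last. By Theorem~\ref{def-Heisenberg} and Proposition~\ref{prop:def-families} the Lie algebras $\fg_{n}(\pmb{\varepsilon})$ are deformations of $\fg_{n}=\fg_{n}(\mathbf{0})$, so the Lie--Poisson brackets \eqref{eq:comm1}--\eqref{def-comm} on $\fg_{n}(\pmb{\varepsilon})^{\vee}$ depend polynomially on $\pmb{\varepsilon}$ and reduce at $\pmb{\varepsilon}=\mathbf{0}$ to those on $\fg_{n}^{\vee}$; likewise \eqref{C1} with $C_{2}=-2$ depends polynomially on $\pmb{\varepsilon}$, and for every $\pmb{\varepsilon}\neq\mathbf{0}$ the orbit $\OO_{2n}(\pmb{\varepsilon})$ is a nonempty $2n$-dimensional coadjoint orbit (Corollary~\ref{cor:real forms}). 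By Remark~\ref{rem:2-1 cover}, as $\pmb{\varepsilon}\to\mathbf{0}$ the equation \eqref{C1} degenerates into the pair \eqref{eq:orbit-phase-space} that cuts out the two components $\OO^{\pm}_{2n}\subset\fg_{n}^{\vee}$, each symplectomorphic to $\big(\RR^{n}\oplus\RR^{n},\sum_{i}dx_{i}\wedge dp_{i}\big)$ by fact (i) of the introduction, with the canonical symplectic structure degenerating accordingly; hence $\{\OO_{2n}(\pmb{\varepsilon})\}_{\pmb{\varepsilon}\in\RR^{3}}$ is a family of deformations of standard phase space.

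For the angular momentum representation, the point is that the functions $\{l_{ij}\}_{1\le i<j\le n}$ satisfy the \emph{undeformed} Poisson relations \eqref{eq:comm1}, since the simplicity of $\fo(n)$ forces the cocycles $f_{1},f_{2},f_{3}$ to leave the $\fo(n)$-brackets untouched (as observed after \eqref{def-comm}). Restricting these functions to the symplectic leaf $\OO_{2n}(\pmb{\varepsilon})$ then yields a Lie algebra homomorphism $\fo(n)\to C^{\infty}(\OO_{2n}(\pmb{\varepsilon}))$, i.e.\ a Hamiltonian $\fo(n)$-action; it is canonical because $\fo(n)$ is precisely the undeformed semisimple factor of $\fg_{n}(\pmb{\varepsilon})$, and by \eqref{C2} it specializes at $\pmb{\varepsilon}=\mathbf{0}$ to the usual angular momentum $l_{ij}=x_{i}p_{j}-x_{j}p_{i}$.

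For the identification over $\cR_{++}$, Corollary~\ref{cor:real forms} gives $\fg_{n}(\pmb{\varepsilon})\cong\fo(n+2)$ there, the compact real form; equivalently, by Remark~\ref{rem:def-quadratic form}, the deformed quadratic form $Q_{\pmb{\varepsilon}}$ on $\RR^{n+2}$ is positive definite. Under the relabeling $x_{i}=l_{i\,n+1}$, $p_{i}=l_{i\,n+2}$, $I=l_{n+1\,n+2}$ composed with the linear change of generators in the proof of Proposition~\ref{prop:def-families}(i), the equations \eqref{C1}--\eqref{C3} defining $\OO_{2n}(\pmb{\varepsilon})$ become the system \eqref{K2}--\eqref{Plucker} of Lemma~\ref{lemma:minimal} with $r^{2}=1$; by Remark~\ref{rem:Gr^{+}_{2}(n+2)} this system is a $2:1$ lift of the Pl\"ucker embedding, whose solution set is the Grassmannian of oriented $2$-planes of $(\RR^{n+2},Q_{\pmb{\varepsilon}})$ --- compact because $Q_{\pmb{\varepsilon}}$ is definite, hence diffeomorphic to $\widetilde{\mathrm{Gr}}_{2}(\RR^{n+2})$. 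Since $\dim\mathrm{SO}(n+2)/\mathrm{SO}(2)\times\mathrm{SO}(n)=2n$, the orbit $\OO_{2n}(\pmb{\varepsilon})$ is exactly this minimal coadjoint orbit and the identification is a diffeomorphism; this recovers the geometric content of Remark~\ref{rem:2-1 cover}.

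The main obstacle is this last step: one must transport the defining equations through the isomorphism $\fg_{n}(\pmb{\varepsilon})\cong\fo(n+2)$, check that the normalization $C_{2}=-2$ goes over to $C_{2}=-2r^{2}$ with $r\neq 0$ (so that one lands in the one-parameter family of Lemma~\ref{lemma:minimal} rather than on a degenerate orbit), and confirm that the over-determined quadratic system \eqref{C1}--\eqref{C3} genuinely cuts out a smooth connected $2n$-dimensional submanifold rather than a singular or reducible variety --- all of which becomes routine once the problem is reduced to the single model $\fo(n+2)$, where Lemma~\ref{lemma:minimal} and Remark~\ref{rem:Gr^{+}_{2}(n+2)} do the work. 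A secondary point is the precise meaning of ``deformation of standard phase space'': $\OO_{2n}(\pmb{\varepsilon})$ remains connected (and, over $\cR_{++}$, compact) but breaks into the two components $\OO^{\pm}_{2n}$ in the limit $\pmb{\varepsilon}\to\mathbf{0}$, so the statement is to be read in the sense of Remark~\ref{rem:2-1 cover}.
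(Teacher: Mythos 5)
Your proposal is correct and follows essentially the same route as the paper: the corollary is there obtained by assembling Corollary~\ref{cor:real forms}, the deformed Casimir equations \eqref{C1}--\eqref{C3}, Lemma~\ref{lemma:minimal} with Remark~\ref{rem:Gr^{+}_{2}(n+2)}, and the degeneration described in Remark~\ref{rem:2-1 cover}, which is exactly what you do (including the observation that the $\fo(n)$-brackets are undeformed, giving the angular momentum representation). The only point worth noting is that your parenthetical ``$Q_{\pmb{\varepsilon}}$ is positive definite on $\cR_{++}$'' presumes $\varepsilon_{3}^{2}<\varepsilon_{1}\varepsilon_{2}$, a caveat inherited from the paper's own description of $\cR_{++}$ rather than a gap you introduce.
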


\begin{remark}\label{rem:cotangent}
Over the lift of the flat conic $\mathrm{pr}^{-1}\left(\cC_{\RR}\right) = \cC_{+}\cup\cC_{-}$ (and in particular, in the lines $\varepsilon_{2}=\varepsilon_{3}=0$ and $\varepsilon_{1}=\varepsilon_{3}=0$), 
the coadjoint orbits $\OO_{2n}(\pmb{\varepsilon})$ degenerate to a manifold isomorphic to $T^{*}S^{n}$ if $\pmb{\varepsilon}\in\cC_{+}$ and two copies of $T^{*}\HH^{n}$ if $\pmb{\varepsilon}\in\cC_{-}$. A proof of this fact is given in proposition \ref{prop:const-curv}. Although the symplectic structure inherited in $T^{*}S^{n}$ (resp. $T^{*}\HH^{n}$) is the standard one (see remark \ref{rem:symplectic}), the variables $x_{i}$ and $p_{j}$ do not define Darboux coordinates. Instead, their commutation relations resemble physically the result of adding an external magnetic field in standard phase space \cite{Nov82,Perelomov}. From a physical point of view, the study of dynamical problems over the complete family of coadjoint orbits $\OO_{2n}(\pmb{\varepsilon})$ can also be interpreted as the study of deformations of dynamical systems on $n$-manifolds of constant sectional curvature (which is equal to $\varepsilon_{2}$ when $\varepsilon_{1} = \varepsilon_{3} = 0$).
\end{remark}

\begin{proposition}\label{prop:const-curv}
There is an induced isomorphism $\OO_{2n}(\pmb{\varepsilon})\cong (T^{*}S^{n},\omega)$ over the locus $\cC_{+}$,
where $\omega$ denotes the corresponding standard symplectic form. Over the locus $\cC_{-}$, $\OO_{2n}(\pmb{\varepsilon})$ is a disjoint union of two connected components $\OO^{+}_{2n}(\pmb{\varepsilon})$ and $\OO_{2n}^{-}(\pmb{\varepsilon})$, each symplectomorphic to $(T^{*}\mathbb{H}^{n},\omega)$, corresponding to the values $I>0$ and $I<0$ respectively. 
\end{proposition}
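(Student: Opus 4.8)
The plan is to exhibit explicit symplectomorphisms from $\OO_{2n}(\pmb{\varepsilon})$ to the relevant cotangent bundles when $\pmb{\varepsilon}$ lies on $\cC_{+}$ or $\cC_{-}$, using the defining equations \eqref{C1}--\eqref{C3} and corollary \ref{cor:real forms}. By the scaling remark we may normalize: on $\cC_{+}$ take $\pmb{\varepsilon} = (0,1,0)$ (so $\fg_{n}(\pmb{\varepsilon}) \cong \fo(n+1)\ltimes\RR^{n+1}$ by corollary \ref{cor:real forms}, with $x_{i} = l_{i\,n+1}$ the abelian generators), and on $\cC_{-}$ take $\pmb{\varepsilon} = (0,-1,0)$ (so $\fg_{n}(\pmb{\varepsilon}) \cong \fo(n,1)\ltimes\RR^{n+1}$). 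First I would write out the orbit equations in this normalization: from \eqref{C1} with $C_2 = -2$ we get $I^{2} \pm p^{2} = 1$ (sign according to $\cC_{\pm}$), together with $\pm l_{ij} = x_i p_j - x_j p_i$ (rescaled), the relations \eqref{C3}, and $I l_{ij} = x_i p_j - x_j p_i$. The relations \eqref{C3} say $\mathbf{l}$ annihilates $\mathbf{x}$ and $\mathbf{p}$, which for $\mathbf{x}\neq 0$ forces $l_{ij}$ to be determined by $\mathbf{x}$ and $\mathbf{p}$.

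Next I would identify the underlying manifold. In the $\cC_{+}$ case, the generators $x_{1},\dots,x_{n}$ of the abelian radical $\RR^{n+1}$ together with $I = l_{n+1\,n+2}$ (which one checks also commutes appropriately, playing the role of an $(n+1)$-st abelian coordinate up to the constraint $I^2 + p^2 = 1$... — more precisely, on this orbit the coadjoint orbit of $\fo(n+1)\ltimes\RR^{n+1}$ of the appropriate type is $T^*S^n$ by the standard description of cotangent bundles of spheres as coadjoint orbits of Euclidean groups, cf. \cite{GS84}). Concretely: set $q = (q_0,q_1,\dots,q_n)$ with $q_0 = I$, $q_i = x_i$; the relation coming from $C_2$ and the Casimirs of the radical forces $|q| = 1$, giving a point of $S^n$, and the momentum-type variables $(p_i, l_{ij})$ encode a cotangent vector at $q$, with the vanishing relations \eqref{C3} and the identity $I l_{ij} = x_i p_j - x_j p_i$ expressing that the covector is tangent to $S^n$ and that $l_{ij}$ is the angular momentum it generates. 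I would make this precise by writing the map $(x,p,I,l)\mapsto (q, \xi)\in T^*S^n$ explicitly and checking it is a bijection with smooth inverse; the two components in the $\cC_{-}$ case arise because the hyperboloid $I^2 - p^2 = 1$ has two sheets $I>0$, $I<0$, each a copy of $\HH^n$ (here $q_0 = I$, $q_i = x_i$ satisfy $q_0^2 - \sum q_i^2 = 1$), and the covector data again exhibits each sheet's orbit as $T^*\HH^n$.

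Then I would verify that the Kirillov--Kostant--Souriau form on $\OO_{2n}(\pmb{\varepsilon})$ pulls back to the canonical symplectic form $\omega = \sum d q_i \wedge d\xi_i$ (restricted/reduced appropriately to $T^*S^n$, resp.\ $T^*\HH^n$). The cleanest route is the general principle that for a semidirect product $G = K\ltimes V$ with $V$ abelian, the coadjoint orbits fibering over a coadjoint orbit $\cO_V\subset V^*\cong V$ of $K$ are, as symplectic manifolds, the cotangent bundle $T^*\cO_V$ with its canonical form (this is a standard symplectic-reduction / Mackey-type statement, see \cite{GS84}); here $\cO_V = S^n$ (resp.\ a sheet of the two-sheeted hyperboloid $\cong\HH^n$) as the sphere/hyperboloid of radius $1$ in $\RR^{n+1}$. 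So the symplectic identification is inherited for free once the manifold-level identification and the radical-coadjoint-orbit identification are in place. I would also remark that these formulas match the $\pmb{\varepsilon}\to 0$ degeneration in remark \ref{rem:2-1 cover}: as $\varepsilon_2\to 0$ the sphere/hyperboloid of radius $1$ flattens near the pole $I = \pm 1$ onto the affine space $\RR^n$, recovering standard phase space with its two orientation components.

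The main obstacle I anticipate is organizing the manifold-level identification cleanly: the orbit is cut out in $\fg_n(\pmb{\varepsilon})^\vee \cong \RR^{n(n-1)/2 + 2n + 1}$ by the many quadratic relations \eqref{C1}--\eqref{C3}, and one must argue that the map to $T^*S^n$ (resp.\ $T^*\HH^n$) is a \emph{global} diffeomorphism — injectivity, surjectivity, and smoothness of the inverse — rather than just a local one, while correctly handling the locus $\mathbf{x} = 0$ (where $l_{ij}$ is not immediately determined by \eqref{C3} and one must instead use $I l_{ij} = x_i p_j - x_j p_i$ together with the constraint surface structure). Once the bijection and its smoothness are established, the symplectic statement follows from the cited general description of cotangent bundles as coadjoint orbits of semidirect products, so I expect that the bookkeeping of the constraint equations, not the symplectic geometry, is where the real work lies.
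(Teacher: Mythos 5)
Your plan follows the paper's strategy in its essentials: reduce to the ray $\varepsilon_{1}=\varepsilon_{3}=0$ on the conic, read off the sphere (resp.\ the two-sheeted hyperboloid, whence the two components over $\cC_{-}$) from the quadratic Casimir constraint, and interpret the remaining coordinates, constrained by \eqref{C2}--\eqref{C3}, as a cotangent vector at the base point. Where you genuinely differ is in how the identification is finished: you invoke the standard description of coadjoint orbits of $K\ltimes\RR^{n+1}$ (with $K=\SO(n+1)$ or $\SO(n,1)$) as cotangent bundles of the $K$-orbit in $\left(\RR^{n+1}\right)^{*}$, which yields the diffeomorphism and the symplectic form in one stroke; the paper instead realizes the orbit explicitly as the rank-$n$ kernel subbundle $E$ of a bundle map $L_{(I,x)}$ over the sphere/hyperboloid, identifies $E\cong T^{*}S^{n}$ (resp.\ $T^{*}\HH^{n}$) via the orthonormal frame bundle $\SO(n+1)\to S^{n}$ (resp.\ $\SO(n,1)\to\HH^{n}$), and verifies that the Kirillov form is the canonical one by the explicit coordinate computation recorded in remark \ref{rem:symplectic}. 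Your route buys a shorter symplectic argument by citing semidirect-product orbit theory \cite{GS84}; the paper's argument is more hands-on but self-contained.

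Two points need tightening. First, be consistent about which variables form the base. With the brackets \eqref{def-comm} at $\pmb{\varepsilon}=(0,\varepsilon_{2},0)$, the abelian ideal is spanned by the generators dual to $x_{i}$ and $I$, and the invariant quadratic on its dual is $I^{2}+\varepsilon_{2}x^{2}$ (as printed, \eqref{C1} has the roles of $\varepsilon_{1}$ and $\varepsilon_{2}$ interchanged relative to \eqref{def-comm}; one checks directly that $I^{2}+\varepsilon_{2}x^{2}+\varepsilon_{1}p^{2}-2\varepsilon_{3}xp-(\varepsilon_{3}^{2}-\varepsilon_{1}\varepsilon_{2})l^{2}$ is the Casimir for \eqref{def-comm}). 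So the sphere/hyperboloid lives in $(I,x)$ and the fibre data is $(p,l)$: your displayed constraint $I^{2}\pm p^{2}=1$ and your later choice $q=(I,x)$ cannot both stand; pick one convention and carry it through, and drop the line ``$\pm l_{ij}=x_{i}p_{j}-x_{j}p_{i}$'', which is the $\pmb{\varepsilon}\to 0$ limit \eqref{eq:orbit-phase-space}, not an equation of $\OO_{2n}(\pmb{\varepsilon})$. Second, a generic coadjoint orbit of $\SO(n+1)\ltimes\RR^{n+1}$ is a \emph{twisted} (magnetic) cotangent bundle of the sphere; it carries the canonical form precisely when the little-group coadjoint datum vanishes. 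Here that is guaranteed by \eqref{C2}: at points of the orbit with $x=0$, $I=\pm 1$, the residual $\fo(n)$-momentum $l_{ij}$ vanishes, so the orbit passes through a point of cotangent type. Make this check explicit; with it, your appeal to the general semidirect-product result is justified, and the remaining work is indeed only bookkeeping.
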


\begin{proof}
It is enough to corroborate this in the case $\varepsilon_{1}=\varepsilon_{3} = 0$; when $\varepsilon_{2} > 0$ (resp. $\varepsilon_{2} < 0$). Then, equation \eqref{C1} determines an $n$-sphere  homogeneous space model in the affine variables $I,x_{i}$ (resp. a two-sheeted $n$-hyperboloid model, with connected components corresponding to the values $I>0$ and $I<0$). Moreover, the orbit $\OO_{2n}(\pmb{\varepsilon})$ of the $\SO(n+1)\ltimes\RR^{n+1}$-action (resp. the $\SO(n,1)\ltimes\RR^{n+1}$-action) on $(\fo(n+1)\ltimes\RR^{n+1})^{*}$  (resp. on $(\fo(n,1)\ltimes\RR^{n+1})^{*}$) determined by equations \eqref{C1}-\eqref{C2} has the structure of a rank-$n$ subbundle $E\to S^{n}$ (resp. two bundles $E^{+}\to\HH^{n}$ and $E^{-}\to\HH^{n}$) of the trivial vector bundle $\fo(n+1)^{*}\times S^{n}$ (resp. two copies of $\fo(n,1)^{*}\times\HH^{n}$), with fiber at a point $(I,x_{1},\dots,x_{n})$ given by the kernel of the map $L_{(I,x_{1},\dots,x_{n})}:\fo(n+1)^{*}\to\fo(n)^{*}$ (resp. $\fo(n,1)^{*}$), defined as
\[
\left(L_{(I,x_{1},\dots,x_{n})}(l,p)\right)_{ij}=Il_{ij}-x_{i}p_{j}+x_{j}p_{i}+\sum_{k=1}^{n}\left(l_{ij}x_{k}-l_{ik}x_{j}+l_{jk}x_{i}\right).
\]
By construction, the bundle of orthonormal frames of $E$ (resp. $E^{+}$ and $E^{-}$) is isomorphic to $\SO(n+1)\to S^{n}$ (resp. $\SO(n,1)\to \HH^{n}$), with fibers corresponding to the isotropy groups of points $(I,x_{1},\dots,x_{n})$ (depending on the values $I>0$ or $I<0$ in the second case), which gives the isomorphism $E\cong T^{*}S^{n}$ (resp. $E^{\pm}\cong T^{*}\HH^{n}$).  
\end{proof}

Observe that on $\cC_{-}$, only the connected component $\OO^{+}_{2n}(\pmb{\varepsilon})$ is of physical significance, as it degenerates to the component $\OO^{+}_{2n}$ corresponding to the value $I=1$ when $\pmb{\varepsilon}\to 0$.

\begin{remark}\label{complex structure}
For any $\pmb{\varepsilon}\in\cR_{++}$,  $\cR_{+-}$, or $\cR_{--}$, the coadjoint orbits $\OO_{2n}(\pmb{\varepsilon})$ are also irreducible Hermitian symmetric spaces, acquiring a natural K\"ahler structure \cite{Bor54}. The tangent space at any point in a given orbit $\OO_{2n}(\pmb{\varepsilon})$ is respectively modeled by one of the quotients $\fm=\fo(n + 2)/\fo(2)\oplus\fo(n)$, $\fo(n + 1,1)/\fo(2)\oplus\fo(n-1,1)$ or $\fo(n,2)/\fo(2)\oplus\fo(n - 2,2)$, and the integrable almost complex structure can be defined as $J=\ad_{\bfI}$, where $\bfI$ is a generator of $\fo(2)\subset\fo(2)\oplus\fo(n - i,i)$, $i = 0, 1, 2$. Therefore, it follows that all orbits $\OO_{2n}(\pmb{\varepsilon})$ possess a natural K\"ahler polarization generalizing the standard complex coordinate polarization determined by
\[
\left(\CC^{n}, \frac{\sqrt{-1}}{2}\sum_{i = 1}^{n} dz_{i} \wedge d\bar{z}_{i}\right),\qquad z_{i} = x_{i} + \sqrt{-1}p_{i}. 
\] 
\end{remark}

\section{Singular real polarizations and free motion}\label{sec:free motion}

By their very definition, the family of coadjoint orbits $\OO_{2n}(\pmb{\varepsilon})$ possess two natural real polarizations, singular over the set $\{I=0\}$, and invariant under a family of groups of symplectomorphisms isomorphic to a deformation of the Euclidean group. These are spanned by the Hamiltonian vector fields corresponding  to the collections of functions $\{x_{i}/I\}$, $\{p_{i}/I\}$ in involution
\[
\{x_{i}/I, x_{j}/I\} = 0,\qquad \{p_{i}/I, p_{j}/I\} = 0,\qquad 1\leq i,j\leq n,
\]
and will be called, respectively, the \emph{position} and \emph{momentum} polarizations.  Both position and momentum polarizations are invariant under a global $\SO(n)$-action, generalizing the standard rotational action in position and momentum coordinates, and which is characterized infinitesimally by the momentum map 
\[
\lambda:\OO_{2n}(\pmb{\varepsilon})\to \fo(n)^{*}, \qquad(\lambda)_{ij}=l_{ij}.
\]
Let $q_{i}=x_{i}/I$, $i=1,\dots,n$. The choice of the position polarization motivates the introduction of a family of functions playing the role of the free-motion Hamiltonians in deformed phase space, namely
\begin{equation}
H_{0}(\pmb{\varepsilon}) = \frac{1}{2}\left(p^{2}+\varepsilon_{2} l^{2}\right) = \frac{1}{2}\left(p^{2}+\varepsilon_{2} \left(p^{2}q^{2}-(pq)^{2}\right)\right),\label{eq:free-motion}
\end{equation} 
and which posses two equivalent geometric interpretations in terms of the dynamical symmetries of a family of ${n + 1 \choose 2 }$-dimensional Lie groups. They not only coincide with the quadratic Casimir invariants of the Lie subalgebras spanned by the dual elements $\{l_{ij}\}$ and $\{p_{k}\}$, but also correspond to $|\mu_{0}(\pmb{\varepsilon})|^{2}$, the square of the norm of a family of momentum maps
\[
\mu_{0}(\pmb{\varepsilon}):\OO_{2n}(\pmb{\varepsilon}) \to\fk(\pmb{\varepsilon})^{*}
\]
where 
\[
\fk(\pmb{\varepsilon})\cong \left\{ 
\begin{array}{cl}
\fo(n+1) &  \text{if}\quad \varepsilon_{2} >0,\\\\ 
\fo(n,1) &   \text{if}\quad \varepsilon_{2} < 0,\\\\ 
\fe_{n+1} &  \text{if} \quad \varepsilon_{2} = 0.
\end{array}
\right.
\]
Thus, $H_{0}(\pmb{\varepsilon})$ and $\mu_{0}(\pmb{\varepsilon})$ respectively generalize the standard free-motion Hamiltonian and the corresponding Euclidean group momentum map in $(\RR^{n}\oplus\RR^{n},\omega)$. In particular, over the contraction $\varepsilon_{1}=\varepsilon_{3}=0$, the coordinates $\{q_{i}\}$ correspond to the \emph{gnomonic coordinates} over the $n$-sphere \cite{Higgs78} if $\varepsilon_{2} > 0$ and hyperbolic $n$-space if $\varepsilon_{2} < 0$ and $I > 0$, while $H_{0}$ corresponds to the Hamiltonian inducing geodesic motion.

\begin{remark}\label{rem:symplectic}
In the coordinates $\{q_{i},p_{j}\}$, defined over the open set $\{I\neq 0\}$, Kirillov's symplectic form on the family $\OO_{2n}(0,\varepsilon_{2},0)$ takes the simple form
\begin{equation}\label{local-2-form}
\omega_{\pmb{\varepsilon}} = -d\theta_{\pmb{\varepsilon}},\qquad \theta_{\pmb{\varepsilon}} = \sum_{i=1}^{n}\left(p_{i}-\varepsilon_{2}\frac{(q,p)q_{i}}{1+\varepsilon_{2}q^{2}}\right)dq_{i}.
\end{equation}
The analogous expression over the real flat conic $\varepsilon_{3}^{2}=\varepsilon_{1}\varepsilon_{2}$ can then be reconstructed by means of a suitable linear transformation (see proposition \ref{prop:def-families}). In particular, when $\varepsilon_{1}=\varepsilon_{3}=0$, $\varepsilon_{2}>0$ (resp. $\varepsilon_{2}<0$),  the above explicit expression for the Liouville form $\theta_{\pmb{\varepsilon}}$ provides the standard Darboux coordinates with conjugated momenta 
\[
p_{i}-\varepsilon_{2}\frac{(q,p)q_{i}}{1+\varepsilon_{2}q^{2}}
\]
on $T^{*}S^{n}$,  (resp. $T^{*}\HH^{n}$ when either $I>0$ or $I<0$). 
\end{remark}

\appendix

\section{Standard phase space as a coadjoint orbit}\label{appendix}

\begin{proposition}\label{prop:App1}
There is a symplectomorphism 
\[
\left(\mathbb{R}^{n}\oplus\mathbb{R}^{n},\omega = \sum_{i = 1}^{n} dx_{i}\wedge dp_{i}\right)\cong \OO^{+}_{2n} 
\]
to a connected component of a $2n$-dimensional coadjoint orbit $\OO_{2n}$ of the group $\mathrm{O}(n)\ltimes\mathrm{H}_{n}$, mapping the standard $\mathrm{O}(n)$-action on $\RR^{n}\oplus\RR^{n}$ to the corresponding coadjoint action on $\OO_{2n}$.
\end{proposition}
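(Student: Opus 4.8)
The plan is to construct the coadjoint orbit of $\mathrm{G}_n = \mathrm{O}(n)\ltimes \mathrm{H}_n$ explicitly and exhibit the symplectomorphism by hand. First I would set up coordinates on the dual Lie algebra $\fg_n^\vee$: dual coordinates $\{l_{ij}\}_{i<j}$ on $\fo(n)^\vee$ and $\{x_i, p_i\}_{i=1}^n$ together with the central coordinate $I$ on $\fh_n^\vee$, with the Lie--Poisson brackets obtained by specializing \eqref{def-comm} to $\pmb{\varepsilon} = (0,0,0)$, namely $\{x_i,x_j\} = \{p_i,p_j\} = 0$, $\{x_i,p_j\} = \delta_{ij} I$, $\{l_{ij},x_k\} = \delta_{ik}x_j - \delta_{jk}x_i$, etc., and $\{x_i, I\} = \{p_i, I\} = 0$. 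Then I would identify the coadjoint orbit $\OO_{2n}$ through a generic point with $I = 1$: since $I$ is central (it is a Casimir), the orbit lies in the hyperplane $\{I = \text{const}\}$, and the relations $I l_{ij} = x_i p_j - x_j p_i$ forced by the $\mathrm{H}_n$-coadjoint action determine $l_{ij}$ as functions of $(x_i, p_j)$ once $I \neq 0$. Concretely I would fix the base point $\xi_0$ with $I = 1$, all $x_i = p_i = 0$, all $l_{ij} = 0$, compute its coadjoint orbit, and check that the orbit is precisely $\{(l_{ij}, x_i, p_i, 1) : l_{ij} = x_i p_j - x_j p_i\}$, which as a manifold is the graph of a map from $\RR^n\oplus\RR^n$ and hence diffeomorphic to $\RR^{2n}$; this is the connected component $\OO^+_{2n}$. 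The component with $I = -1$ is the other one, $\OO^-_{2n}$.

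Next I would verify the symplectomorphism. The map $\Phi: \RR^n\oplus\RR^n \to \OO^+_{2n}$, $(x,p)\mapsto (x_ip_j - x_jp_i,\, x_i,\, p_i,\, 1)$, is a diffeomorphism by the previous paragraph. To see that it pulls back the Kirillov--Kostant--Souriau form $\omega_{\mathrm{KKS}}$ to $\sum dx_i\wedge dp_i$, I would use the standard fact that on a coadjoint orbit the KKS form evaluated on the fundamental vector fields $X_\xi, X_\eta$ at a point $\mu$ equals $\langle \mu, [\xi,\eta]\rangle$. It suffices to evaluate this on a basis of tangent vectors coming from the fundamental vector fields of $\bfe_i$ and $\bfe_{n+i}$ (the translations in the Heisenberg part), since these already span the tangent space to $\OO^+_{2n}$ at $\xi_0$ and, by $\mathrm{H}_n$-equivariance, at every point. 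At $\mu = (l,x,p,I)$ the pairing $\langle \mu, [\bfe_i,\bfe_{n+j}]\rangle = \langle\mu, \delta_{ij}\bfI\rangle = \delta_{ij} I = \delta_{ij}$, while $\langle\mu,[\bfe_i,\bfe_j]\rangle = 0 = \langle\mu,[\bfe_{n+i},\bfe_{n+j}]\rangle$; comparing with the values of $\sum dx_k\wedge dp_k$ on the corresponding coordinate vector fields $\partial_{x_i}, \partial_{p_i}$ (which are $\Phi$-related to those fundamental vector fields) gives exactly the agreement, since $\bfe_i$ generates a unit translation in $x_i$ and $\bfe_{n+i}$ a unit translation in $p_i$ on the orbit. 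Finally, equivariance under $\mathrm{O}(n)$ is immediate: the coadjoint action of $g\in\mathrm{O}(n)$ rotates $(x_i)$ and $(p_i)$ by $g$ and $(l_{ij})$ by the induced action on $\bigwedge^2\RR^n$, which under $\Phi$ matches the standard diagonal $\mathrm{O}(n)$-action on $\RR^n\oplus\RR^n$; and this action does preserve $\sum dx_i\wedge dp_i$.

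I would also need to confirm that $\OO_{2n}$ really is a single coadjoint orbit of $\mathrm{G}_n$ of dimension $2n$ with exactly two connected components $I = \pm 1$ (not, say, a union of more orbits). This follows because the $\mathrm{H}_n$-action alone acts transitively on each fiber $\{I = \pm 1\}$ with the $l$-coordinates slaved, and $\mathrm{O}(n)$ preserves each such fiber; a dimension count ($\dim\mathrm{G}_n = \binom n2 + 2n + 1$, stabilizer of $\xi_0$ of dimension $\binom n2 + 1$) gives orbit dimension $2n$. The main obstacle I anticipate is purely bookkeeping: making sure the sign and normalization conventions for the KKS form, the coadjoint action, and the identification of $\bfe_i$ vs.\ $\bfe_{n+i}$ with $\partial_{x_i}$ vs.\ $\partial_{p_i}$ are all mutually consistent so that the pulled-back form comes out as $+\sum dx_i\wedge dp_i$ rather than its negative or a rescaling; once the conventions are pinned down, every step is a short linear-algebra check. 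A secondary subtlety worth a sentence is explaining why the hypothesis $I\neq 0$ (equivalently, restricting to the nonzero level of the central Casimir) is what makes the orbit genuinely $2n$-dimensional and diffeomorphic to $\RR^{2n}$, whereas $I = 0$ yields lower-dimensional orbits that do not model phase space.
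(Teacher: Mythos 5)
Your proposal is correct and follows essentially the same route as the paper: identify $\fg_n^{\vee}$ via the dual variables $\{l_{ij},x_i,p_i,I\}$, cut out the orbit by $I=\pm 1$ together with the angular momentum relations $l_{ij}=x_ip_j-x_jp_i$, and check that on the component $I=1$ the Lie--Poisson (equivalently KKS) structure reduces to $\{x_i,p_j\}=\delta_{ij}$, with the $\mathrm{O}(n)$-equivariance immediate; your version simply spells out the fundamental-vector-field computation that the paper leaves as a remark. The only caveat is your side claim that the two components $I=\pm1$ form a \emph{single} coadjoint orbit: since $\mathrm{O}(n)$ acts trivially on the central element, $I$ is invariant under the full coadjoint action and the two sheets are strictly two orbits --- but this mirrors the paper's own phrasing and does not affect the symplectomorphism with $\OO^{+}_{2n}$.
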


\begin{proof}
It is convenient to identify a suitable set of generators and relations on the dual space $\fg^{\vee}_{n}$. Let $\{x_{1},p_{1},\dots,x_{n},p_{n},I\}$ be a set of standard dual variables for the Heisenberg Lie algebra $\fh_{n}$, and let $\{l_{ij}\}_{1\leq i < j \leq n}$ be dual variables for the orthogonal Lie algebra $\fo(n)$. By definition, the symplectic structure of a coadjoint orbit is determined by the Lie-Poisson bracket on $C^{\infty}(\fg_{n}^{\vee})$. Consider the $2n$-dimensional coadjoint orbit $\OO_{2n}\subset \fg_{n}^{\vee}$ determined by fixing the values $I=\pm1$, together with the \emph{angular momentum relations}
\[
l_{ij} = x_{i}p_{j} - x_{j}p_{i}, \qquad 1 \leq i < j \leq n.
\]
The canonical commutation relations $\{x_{i},p_{j}\} = \delta_{ij}$ will follow if we restrict to the connected component $\OO^{+}_{2n}$ given by $I = 1$. The correspondence of symplectic $\mathrm{O}(n)$-actions readily follows.
\end{proof}

\begin{proposition}\label{prop:App2}
Let  $Q_{0}$ denote the quadratic form in $\RR^{n+2}$ prescribed by $$Q_{0}(a_{1},\dots, a_{n+2}) = a_{1}^{2} + \dots + a_{n}^{2}.$$ There is an isomorphism $\fg_{n}\cong \fo\left(\RR^{n+2},Q_{0}\right)$. There is an induced diffeomorphism between $\OO_{2n}$ and the Zariski open subset $\cU\subset \widetilde{\mathrm{Gr}}_{2}\left(\RR^{n+2}\right)$ consisting of oriented 2-planes $P\subset\RR^{n+2}$ such that $a_{n+1}\wedge a_{n+2}|_{P}\not\equiv 0$, i.e., whose image under the Pl\"ucker embedding lies in the complement of the zero locus 
\[
Z(a_{n+1}\wedge a_{n+2})\subset \PP\left(\bigwedge^{2}\RR^{n+2}\right).
\]
\end{proposition}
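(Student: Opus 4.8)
The plan is to produce the isomorphism $\fg_n \cong \fo(\RR^{n+2},Q_0)$ by exhibiting an explicit basis-level correspondence, and then to transport the coadjoint orbit description to the Grassmannian via the Plücker embedding. First I would fix the canonical basis $\{\bv_1,\dots,\bv_{n+2}\}$ of $\RR^{n+2}$ and recall that $\fo(\RR^{n+2},Q_0)$ consists of the endomorphisms that are skew with respect to the bilinear form $B_0$ polarizing $Q_0$; since $Q_0$ has signature $(n,0)$ and isotropy index $2$, the plane $W = \mathrm{Span}\{\bv_{n+1},\bv_{n+2}\}$ is totally isotropic and $B_0$ restricts to the standard positive form on $\mathrm{Span}\{\bv_1,\dots,\bv_n\}$. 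I would then use the standard fact that $\fo$ of a nondegenerate form is spanned by the ``bivector'' operators $X_{ab} = \bv_a \otimes B_0(\bv_b,\cdot) - \bv_b \otimes B_0(\bv_a,\cdot)$; matching these to the generators of $\fg_n$ via the assignment already written down in remark \ref{rem:def-quadratic form} (namely $\bv_i\wedge\bv_j \mapsto \bfl_{ij}$, $\bv_i\wedge\bv_{n+1}\mapsto \bfe_i$, $\bv_i\wedge\bv_{n+2}\mapsto \bfe_{n+i}$, $\bv_{n+1}\wedge\bv_{n+2}\mapsto \bfI$) and comparing the resulting structure constants against the commutation relations of $\fg_n = \fo(n)\ltimes\fh_n$ is a routine but necessary check: the $\bfl_{ij}$ close into $\fo(n)$, the brackets $[\bfl_{ij},\bfe_k]$ reproduce the standard $\fo(n)$-action on $\RR^n\oplus\RR^n$, $[\bfe_i,\bfe_{n+j}] = \delta_{ij}\bfI$ because $B_0(\bv_{n+1},\bv_{n+2})$ is the only nonvanishing pairing involving $W$, and $\bfI$ is central precisely because $W$ is isotropic. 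This establishes the first claim.

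Next I would pass to the dual picture. Under the identification $\bigwedge^2\RR^{n+2} \cong \fo(\RR^{n+2},Q_0)$ (available because $B_0$ is nondegenerate), the coadjoint orbit $\OO_{2n}\subset\fg_n^\vee$ from Proposition \ref{prop:App1} — cut out by $I = 1$ together with the angular momentum relations $l_{ij} = x_ip_j - x_jp_i$ — translates, after dualizing the assignment above, into a set of quadratic equations on the Plücker coordinates of $\bigwedge^2\RR^{n+2}$. The relation $I = 1$ becomes the affine normalization $a_{n+1}\wedge a_{n+2} = 1$ (in Plücker coordinates), which is exactly the condition cutting out the complement of $Z(a_{n+1}\wedge a_{n+2})$ inside $\PP(\bigwedge^2\RR^{n+2})$; and the angular-momentum relations together with the relations $l_{ij}x_k - l_{ik}x_j + l_{jk}x_i = 0$ and their $p$-analogues (equations \eqref{C3}, which hold automatically on the orbit as noted after Remark \ref{rem:Gr^{+}_{2}(n+2)}) are precisely the Plücker relations \eqref{Plucker} restricted to this affine chart. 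Hence $\OO_{2n}$ is identified with the locus of decomposable bivectors $\omega$ with $a_{n+1}\wedge a_{n+2}(\omega) \neq 0$, i.e. with the oriented $2$-planes $P$ for which $\bv_{n+1}^*\wedge\bv_{n+2}^*$ does not annihilate $\bigwedge^2 P$. Normalizing to $I=1$ rather than $I=\pm1$ selects one sheet of the $2:1$ Plücker lift, i.e. fixes the orientation, which is why we land in $\widetilde{\mathrm{Gr}}_2(\RR^{n+2})$ rather than the unoriented Grassmannian.

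The main obstacle I anticipate is purely bookkeeping: verifying that the Plücker relations on $\bigwedge^2\RR^{n+2}$ restricted to the chart $\{a_{n+1}\wedge a_{n+2}=1\}$ are equivalent to the full system $\{l_{ij}=x_ip_j-x_jp_i\}\cup\eqref{C3}$, and no more — in other words, that the coadjoint orbit and the affine Plücker chart are cut out by exactly the same ideal. One should be careful here that the Plücker relations are overdetermined (Remark \ref{rem:Gr^{+}_{2}(n+2)}): it suffices to generate them from the subcollection involving the fixed coordinate $l_{n+1\,n+2}=I$, which is precisely why the angular-momentum relations $l_{ij}=x_ip_j-x_jp_i$ (the relations \eqref{C2} with $I=1$) already generate the rest, and the relations \eqref{C3} then follow as consequences rather than needing to be imposed separately. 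Once this equivalence of ideals is confirmed, smoothness of both sides (the orbit is a coadjoint orbit, the Grassmannian chart is manifestly smooth) upgrades the algebraic identification to a diffeomorphism onto the stated Zariski-open subset $\cU$.
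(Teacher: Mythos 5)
There is a genuine gap in the first half of your argument: you treat $Q_{0}$ as if it were a \emph{nondegenerate} form containing $W=\mathrm{Span}\{\bv_{n+1},\bv_{n+2}\}$ as an isotropic plane, asserting that $B_{0}(\bv_{n+1},\bv_{n+2})$ is ``the only nonvanishing pairing involving $W$''. But $Q_{0}(a_{1},\dots,a_{n+2})=a_{1}^{2}+\dots+a_{n}^{2}$ is degenerate of isotropy index $2$: $W$ is the \emph{radical} of $B_{0}$, so every pairing involving $W$ vanishes, including $B_{0}(\bv_{n+1},\bv_{n+2})$. If that pairing were nonzero the signature would be $(n+1,1)$ and the resulting algebra would be $\fo(n+1,1)$, in which $\bv_{n+1}\wedge\bv_{n+2}$ is \emph{not} central --- your two justifications (``$\bfI$ is central because $W$ is isotropic'' and ``$[\bfe_{i},\bfe_{n+j}]=\delta_{ij}\bfI$ because of the pairing on $W$'') are mutually inconsistent. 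In the actual degenerate situation the Heisenberg bracket $[\bfe_{i},\bfe_{n+j}]=\delta_{ij}\bfI$ arises from the pairings $B_{0}(\bv_{i},\bv_{j})=\delta_{ij}$ among the first $n$ basis vectors, and centrality of $\bfI$ holds because $W$ is the radical. Relatedly, your framework ``$\fo(\RR^{n+2},Q_{0})=$ endomorphisms skew with respect to $B_{0}$, spanned by the bivector operators $X_{ab}$'' fails here: for a degenerate form the skew-endomorphism algebra has dimension $\binom{n}{2}+2n+4\neq\dim\fg_{n}$, the bivector operators do not span it, and the map $\bv_{a}\wedge\bv_{b}\mapsto X_{ab}$ kills $\bv_{n+1}\wedge\bv_{n+2}$, so the central extension element would be lost entirely. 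The paper avoids this by defining the Lie algebra structure directly on $\bigwedge^{2}\RR^{n+2}$ via the bracket induced by the (possibly degenerate) bilinear form through the operators $L_{v\wedge w}(u)=(v,u)w-(w,u)v$, and then checking that the resulting structure constants are exactly those of $\fg_{n}$; this is the statement that $\fg_{n}\cong\fo(\RR^{n+2},Q_{0})$ is to be read through, and your proof needs to be rebuilt on that construction rather than on skewness with respect to $B_{0}$.

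The second half of your argument (identifying the relations $I=\pm1$ and $l_{ij}=x_{i}p_{j}-x_{j}p_{i}$ with the Pl\"ucker relations on the affine chart $a_{n+1}\wedge a_{n+2}=\pm1$, using that the subcollection through the fixed coordinate generates all Pl\"ucker relations) is essentially the paper's argument and is fine, with one bookkeeping slip: $\OO_{2n}$ as defined in proposition \ref{prop:App1} consists of \emph{both} components $I=\pm1$, and these correspond to the two orientations of a plane in the complement of $Z(a_{n+1}\wedge a_{n+2})$, which is how one obtains a diffeomorphism onto all of $\cU$; restricting to $I=1$ as you do would only produce half of $\cU$ (the component $\OO^{+}_{2n}$), not the set $\cU$ in the statement.
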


\begin{proof}
Recall that any bilinear form $(\cdot,\cdot)$ on a vector space $V$ induces a Lie algebra structure on $\bigwedge^{2} V$ in terms of the orthogonal endomorphisms 
\[
v\wedge w \mapsto L_{v\wedge w}(u) := (v,u)w - (w,u)v, 
\]
\cite{GS84,GS06}. A direct computation shows that the Lie algebra structure on $\bigwedge^{2} \RR^{n+2}$ induced by the quadratic form $Q_{0}(a_{1},\dots, a_{n+2}) = a_{1}^{2} + \dots + a_{n}^{2}$ is isomorphic to $\fg_{n}$ under the dual correspondence 
\[
a_{i} \wedge a_{j} \mapsto l_{ij} \quad 1\leq i < j \leq n, 
\]
\[
a_{i}\wedge a_{n + 1} \mapsto  x_{i},\quad a_{i}\wedge a_{n + 2} \mapsto p_{i},\quad 1 \leq i \leq n,\quad  a_{n+1}\wedge a_{n+2}\mapsto I.
\]
Such a correspondence identifies the angular momentum relations along the hyperplanes $I = \pm 1$ in $\fg_{n}$ with the Pl\"ucker relations along the hyperplanes $a_{n+1}\wedge a_{n+2} = \pm 1$. 
Let  $\mathrm{pr}:\widetilde{\mathrm{Gr}}_{2}\left(\RR^{n+2}\right)\rightarrow \mathrm{Gr}_{2}\left(\RR^{n+2}\right)$ the projection forgetting orientation, $\iota:\mathrm{Gr}_{2}\left(\RR^{n+2}\right)\hookrightarrow \PP\left(\bigwedge^{2}\RR^{n+2}\right)$ the classical Pl\"ucker embedding, and let 
\[
\cU = \widetilde{\mathrm{Gr}}_{2}\left(\RR^{n+2}\right)\setminus \left(\mathrm{pr}\circ\iota\right)^{-1}\left(Z(a_{n+1}\wedge a_{n+2})\right)
\] 
Since the level sets $a_{n+1}\wedge a_{n+2} = \pm 1$ in $\bigwedge^{2}\RR^{n+2}$ determine uniquely a choice of orientation in every 2-plane in $\mathrm{Gr}_{2}\left(\RR^{n+2}\right)\setminus \iota^{-1}\left(Z(a_{n+1}\wedge a_{n+2})\right)$, we conclude that there is an induced diffeomorphism $\cU\cong \OO_{2n}$. 
 In particular the stabilizer in $\mathrm{G}_{n}$ of the unique totally isotropic 2-plane $W$ in $\RR^{n+2}$ is equal to $\mathrm{O}(n)\times\RR\subset \mathrm{O}\left(\RR^{n+2},Q_{0}\right)\cong \mathrm{G}_{n}$, with the $\RR$-factor corresponding to central extension elements (cf. remark \ref{rem:2-1 cover}). 
\end{proof}

\begin{remark}
Under the correspondences described in propositions \ref{prop:App1} and \ref{prop:App2}, the Howe pair $( \mathrm{O}(n),\mathrm{SL}(2,\RR))$ of $(\mathbb{R}^{n}\oplus\mathbb{R}^{n},\omega)$ \cite{KKS78,Howe89} is induced by the maximal compact subgroup $G\subset\mathrm{O}\left(\RR^{n+2},Q_{0}\right)\cong \mathrm{G}_{n}$ and the group $G'$ of endomorphisms of the unique totally isotropic plane in $\left(\RR^{n+2},Q_{0}\right)$ preserving the area element $ a_{n+1}\wedge a_{n+2} = 1$.
\end{remark}

\subsection*{Acknowledgments} 
I would like to thank Jacob Mostovoy for introducing me to the ideas that led to this work and for multiple discussions, Rolf Farnsteiner for clarifying a question on spectral sequences, and the referees for their constructive criticism and important remarks. The majority of this work was developed under a postdoctoral fellowship at the MPIM in Bonn in the Spring of 2015. While concluding it the author was supported by the DFG SPP 2026 priority programme ``Geometry at infinity".

\bibliographystyle{amsalpha}
\bibliography{Deformations}
\end{document}